\documentclass[pdflatex,sn-mathphys-num]{sn-jnl}

\usepackage{graphicx}%
\usepackage{multirow}%
\usepackage{amssymb}
\usepackage{amsmath}
\usepackage{amsfonts}
\usepackage{mathrsfs}
\usepackage{mathtools}
\usepackage[title]{appendix}%
\usepackage{xcolor}%
\usepackage{textcomp}%
\usepackage{centernot}
\usepackage{manyfoot}%
\usepackage{booktabs}%
\usepackage{algorithm}%
\usepackage{algorithmicx}%
\usepackage{algpseudocode}%
\usepackage{listings}%
\usepackage{enumitem}

\usepackage{tikz}
\usetikzlibrary{backgrounds,arrows.meta,positioning,fit,calc,decorations.pathreplacing,shapes.multipart,shapes.geometric}
\usepackage{cleveref}

\theoremstyle{thmstyleone}
\newtheorem{theorem}{Theorem}[section]
\newtheorem{proposition}[theorem]{Proposition}

\newtheorem{corollary}[theorem]{Corollary}
\theoremstyle{definition}

\theoremstyle{remark}
\newtheorem{remark}[theorem]{Remark}

\begin{document}

\title{The Geometry of Quasi-Cycles: How Stoichiometric Covariance Alters Pre-Bifurcation Signatures}

\author*[1]{\fnm{Louis Shuo} \sur{Wang}}\email{wang.s41@northeastern.edu}
\equalcont{These authors contributed equally to this work as co-first authors.}

\author[2,3]{\fnm{Jiguang} \sur{Yu}}\email{jyu678@bu.edu}
\equalcont{These authors contributed equally to this work as co-first authors.}

\author[4]{\fnm{Ye} \sur{Liang}}\email{ye-liang@uiowa.edu}

\author[5]{\fnm{Jilin} \sur{Zhang}}\email{jilin.zhang25@imperial.ac.uk}

\affil[1]{\orgdiv{Department of Mathematics},
  \orgname{Northeastern University},
  \orgaddress{\city{Boston}, \postcode{02115}, \state{MA}, \country{USA}}}

\affil[2]{\orgdiv{College of Engineering},
  \orgname{Boston University},
  \orgaddress{\city{Boston}, \postcode{02215}, \state{MA}, \country{USA}}}

\affil[3]{\orgdiv{GSS Fellow of Institute for Global Sustainability},
  \orgname{Boston University},
  \orgaddress{\city{Boston}, \postcode{02215}, \state{MA}, \country{USA}}}

\affil[4]{\orgdiv{Department of Industrial and Systems Engineering},
  \orgname{The University of Iowa},
  \orgaddress{\city{Iowa City}, \postcode{52242}, \state{IA},\country{USA}}}

\affil[5]{\orgdiv{Department of Mathematics},
  \orgname{Imperial College London},
  \orgaddress{\city{London}, \postcode{SW7 2AZ}, \country{UK}}}
  
\abstract{
Environmental enrichment can destabilize predator–prey coexistence through a Hopf bifurcation, yet real ecosystems are finite and intrinsically stochastic. We investigate how mechanistically derived demographic noise shapes near-Hopf dynamics in the Rosenzweig–MacArthur model by systematically comparing two diffusion closures that share identical deterministic drift but differ solely in predation-induced covariance structure. Starting from a continuous-time Markov chain description, we derive a full-covariance stochastic differential equation whose diffusion tensor inherits stoichiometric coupling, generating a negative prey–predator cross-covariance. This model is contrasted with a drift-matched diagonal-noise comparator. Using linear noise approximation, Lyapunov analysis, and matrix-valued power spectral density formulations, we propagate local covariance structure through the entire diagnostic chain, including stochastic sensitivity ellipses and a dimensionless noisy-precursor indicator. 
The results highlight that drift equivalence does not imply covariance equivalence and show how event-level noise geometry influences macroscopic behavior in nonlinear ecological systems. This work integrates bifurcation theory and stochastic analysis to advance multi-scale modeling of complex interacting systems.}

\keywords{complex systems; stochastic dynamical systems; Hopf bifurcation; diffusion approximation; covariance structure; linear noise approximation; power spectral density; multi-scale modeling}

\maketitle

\section{Introduction}
\label{sec:introduction}

\begin{figure}[htbp]
\centering
\resizebox{\linewidth}{!}{%
\begin{tikzpicture}[
  x=1mm, y=1mm,
  font=\scriptsize,
  every node/.style={text=black},
  >=Latex,
  box/.style={draw, rounded corners=2pt, thick, align=center, inner sep=4pt, fill=white},
  sbox/.style={draw, rounded corners=2pt, thick, align=center, inner sep=3.5pt, fill=white},
  pbox/.style={draw, rounded corners=2pt, thick, align=left, inner sep=4pt, fill=white},
  arrow/.style={->, thick, shorten <=2pt, shorten >=2pt, rounded corners=2pt},
  dashedarrow/.style={->, thick, dashed, shorten <=2pt, shorten >=2pt, rounded corners=2pt},
  lab/.style={font=\scriptsize, align=center},
  emph/.style={font=\scriptsize\bfseries, align=center}
]

\coordinate (L) at (0,0);
\coordinate (C) at (60,0);
\coordinate (R) at (120,0);

\def\yTop{0}
\def\ySto{-22}
\def\yComp{-48}
\def\yLNA{-74}
\def\yDiag{-100}
\def\yBottom{-128}
\def\yMsg{-144}

\pgfmathsetmacro{\dropToBottom}{\yBottom-\yTop+10}

\node[box, minimum width=45mm] (det) at ($(L)+(0,\yTop)$) {
Deterministic R--M backbone\\
$\dot x=b(x)$\\
Enrichment $k\uparrow \Rightarrow k\to k_H$
};

\node[box, minimum width=46mm] (ctmc) at ($(R)+(0,\yTop)$) {
Finite populations\\
CTMC/CME description\\
birth / death / predation
};

\node[sbox, minimum width=70mm] (stoich) at ($(C)+(0,\ySto)$) {
Stoichiometry \& diffusion limit\\
$a(x)=\frac1\Omega S\,\mathrm{diag}(f(x))S^\top$
};

\node[pbox, minimum width=52mm] (full) at ($(L)+(0,\yComp)$) {
	Full covariance\\
Coupled predation event\\
$q_{12}<0$ in $D_*=a(K_3)$
};

\node[pbox, minimum width=52mm] (diag) at ($(R)+(0,\yComp)$) {
	Diagonal comparator\\
Independent channels\\
$q_{12}=0$ in $D_*$
};

\node[box, minimum width=108mm] (lna) at ($(C)+(0,\yLNA)$) {
LNA / OU near coexistence $K_3$ ($k<k_H$)\\
$dy_t=J\,y_t\,dt+B\,dW_t,\quad D_*=BB^\top$
};

\node[sbox] (lyap) at ($(L)+(0,\yDiag)$) {
Lyapunov equation\\
$J W+WJ^\top+D_*=0$
};

\node[sbox] (psd) at ($(C)+(0,\yDiag)$) {
Matrix PSD\\
$S(\omega)$ peak
};

\node[sbox] (ssf) at ($(R)+(0,\yDiag)$) {
SSF ellipse\\
$z^\top W^{-1}z\le \chi^2_2(p)$
};

\node[box, minimum width=46mm] (open) at ($(L)+(0,\yBottom)$) {
Open-domain viewpoint\\
quasi-cycles / precursors
};

\node[box, minimum width=46mm] (ssa) at ($(C)+(0,\yBottom)$) {
SSA (Gillespie)\\
Exact CTMC benchmark
};

\node[box, minimum width=46mm] (abs) at ($(R)+(0,\yBottom)$) {
Absorbed viewpoint\\
extinction statistics
};

\node[emph] (msg) at ($(C)+(0,\yMsg)$) {
Mechanistic noise coupling ($q_{12}<0$) reshapes near-Hopf fluctuation geometry and spectral amplification.
};

\draw[arrow] (det) -- node[lab, above]{same $b(x)$} (ctmc);

\draw[dashedarrow] (ctmc.south) -- ++(0,-8) -| node[lab, right]{diffusion limit} (stoich.east);

\draw[arrow] (stoich.south west) -- (full.north east);
\draw[arrow] (stoich.south east) -- (diag.north west);

\draw[arrow] (full.south) -- ++(0,-6) -| (lna.west);
\draw[arrow] (diag.south) -- ++(0,-6) -| (lna.east);

\draw[arrow] (lna.south) -- (psd.north);
\draw[arrow] (lna.south west) -- (lyap.north east);
\draw[arrow] (lna.south east) -- (ssf.north west);

\draw[dashedarrow] (lyap.south) -- (open.north);
\draw[dashedarrow] (ssf.south) -- (abs.north);

\draw[arrow] (ssa.north) -- (psd.south);

\begin{pgfonlayer}{background}
\node[draw, very thick, rounded corners=3pt, fill=black!3,
  fit=(det)(ctmc)(stoich)(full)(diag)(lna)(lyap)(psd)(ssf)(open)(abs)(ssa)(msg),
  inner sep=9pt] {};
\end{pgfonlayer}

\end{tikzpicture}%
}
\caption{\textbf{Mechanism overview.}
Environmental enrichment drives the deterministic backbone toward Hopf.
Stoichiometric coupling determines the diffusion covariance.
Two drift-matched closures differing only in off-diagonal covariance propagate distinct fluctuation geometry through LNA diagnostics and extinction-relevant metrics.}
\label{fig:mechanism_overview}
\end{figure}
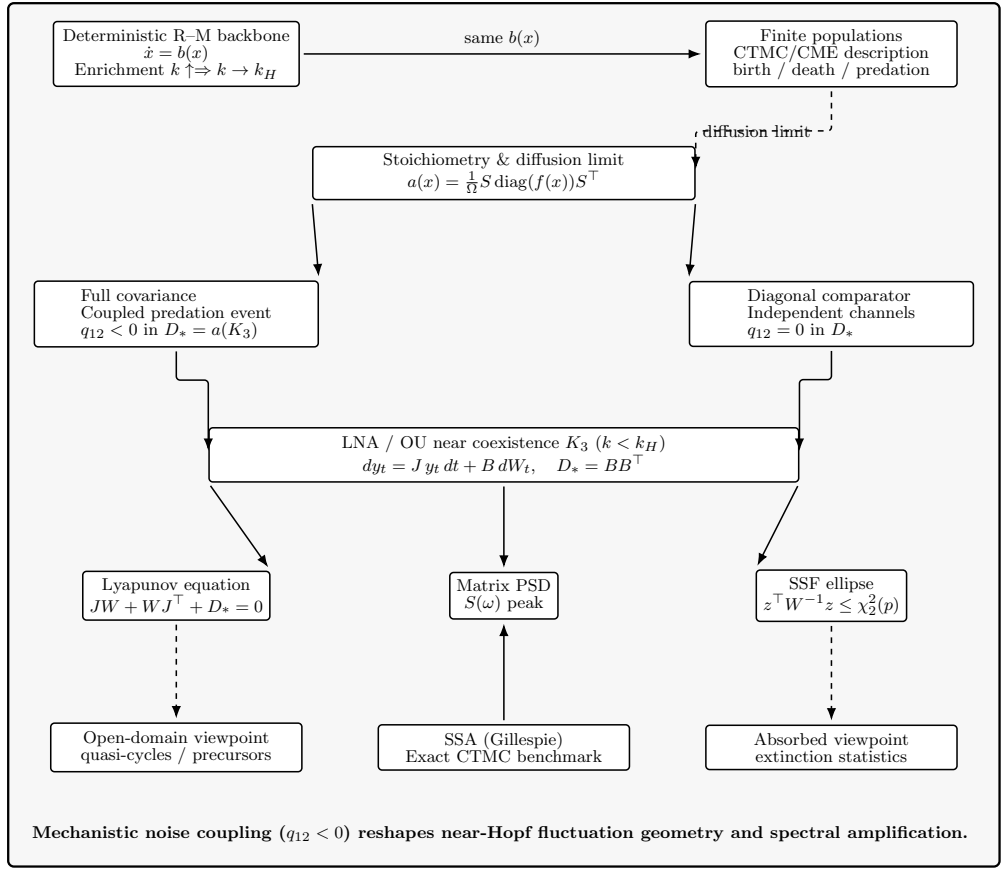

Understanding the mechanisms by which ecosystems transition from stable coexistence to sustained oscillatory behavior is a central problem in theoretical ecology. 
In deterministic predator--prey models of the Rosenzweig--MacArthur (R--M) type \citep{rosenzweig_graphical_1963,macarthur_species_1970,rosenzweig_paradox_1971}, 
progressive environmental enrichment constitutes an increase in prey carrying capacity.
This increase drives the coexistence equilibrium through a supercritical Hopf bifurcation and generates large-amplitude limit cycles that can substantially elevate extinction risk \citep{gilpin_enriched_1972,may_limit_1972,riebesell_paradox_1974}. 
This paradox of enrichment has remained one of the most discussed phenomena in population ecology for over five decades \citep{rosenzweig_paradox_1971,abrams_invulnerable_1996,kirk_enrichment_1998,roy_stability_2007}.

Real populations, however, are finite. 
Demographic stochasticity is the inherent randomness arising from discrete birth, death, and predation events in populations of finite size.
This randomness ensures that the approach to a Hopf bifurcation is accompanied by growing fluctuations even while the deterministic equilibrium remains locally stable \citep{mckane_predator-prey_2005,boland_how_2008,pineda-krch_tale_2007}. 
These noise-driven, quasi-periodic fluctuations are commonly termed quasi-cycles or stochastic amplification.
They produce coherent oscillatory signatures in both time series and power spectra that can serve as early-warning signals of an impending bifurcation \citep{kuehn_mathematical_2011,scheffer_early-warning_2009,scheffer_anticipating_2012,dakos_methods_2012}. 
Detecting and interpreting such noisy precursors has practical relevance for ecosystem management, where interventions must typically be made before the deterministic bifurcation point is actually crossed \citep{boettiger_quantifying_2012,boettiger_patterns_2013,lenton_tipping_2008}.

The linear noise approximation (LNA) provides the standard analytical bridge between the stochastic birth--death process and its continuous diffusion limit \citep{van_kampen_stochastic_2011,kurtz_solutions_1970,kurtz_limit_1971,gardiner_stochastic_2009}. 
Within the LNA framework, fluctuations near a stable equilibrium are described by an Ornstein--Uhlenbeck (OU) process. 
The stationary covariance matrix of this process satisfies a continuous Lyapunov equation, and its spectral properties are encoded in a matrix-valued power spectral density (PSD) \citep{gardiner_stochastic_2009,risken_fokker-planck_1996}. 
As the Hopf threshold is approached, the resolvent of the linearized dynamics develops a resonance near the frequency of the incipient limit cycle, producing a characteristic spectral peak in the PSD that sharpens and grows in amplitude \citep{mckane_predator-prey_2005,boland_how_2008,lugo_quasicycles_2008,black_stochastic_2012}.

A key modeling choice concerns the covariance structure of the demographic noise.
This choice is often made implicit and forms the central focus of this paper.
When a stochastic population model is derived from a continuous-time Markov chain (CTMC) or chemical master equation (CME) description of discrete events, the diffusion covariance in the resulting stochastic differential equation (SDE) inherits the stoichiometric coupling of those events \citep{kurtz_solutions_1970,gillespie_exact_1977,gillespie_chemical_2000,ethier_markov_1986,anderson_stochastic_2015}. 
In predator--prey systems, predation is mechanistically a coupled event: a single encounter simultaneously removes a prey individual and, with some probability, produces a predator offspring. 
This same-event coupling generates a negative prey--predator cross-covariance in the diffusion tensor \citep{wang_algebraicspectral_2026}. 
Many stochastic analyses of ecological models, including influential studies of quasi-cycles, employ diagonal (uncorrelated) or phenomenologically specified noise \citep{boland_how_2008,alonso_stochastic_2007,wang_analysis_2025,chatterjee_predatorprey_2024,pal_nonequilibrium_2025}. 
They effectively treat prey loss and predator gain as statistically independent channels.

Whether this simplification matters for near-Hopf diagnostics has not been systematically examined. 
This paper addresses that gap by developing and comparing two diffusion closures that share an identical R--M deterministic drift but differ precisely in their predation-induced covariance structure.
We contrast a full-covariance model retaining the mechanistically inherited negative cross-covariance with a drift-matched diagonal-noise comparator treating predation-related prey loss and predator gain as independent channels. 
The comparison is ``fair'' in the sense that the sole modeling difference resides in the off-diagonal diffusion covariance; deterministic dynamics, marginal noise intensities, and parameter values are held identical.

We propagate the full non-diagonal local diffusion covariance through the entire LNA diagnostic chain, which includes stationary covariance, matrix PSD, and stochastic sensitivity function (SSF) confidence ellipses. 
This analysis demonstrates that the covariance structure of demographic noise affects not only the magnitude but also the geometry of fluctuations near a Hopf bifurcation.

Figure~\ref{fig:mechanism_overview} provides a mechanistic overview of our work. It is organized as follows. 
Section~\ref{sec:deterministic} reviews the deterministic R--M model, its equilibria, and the enrichment-driven Hopf bifurcation.  Section~\ref{sec:stochastic_model} defines the stochastic diffusion models used throughout the paper, with emphasis on the full-covariance and diagonal-noise closures and the structural sign of the predation-induced cross-covariance.  Section~\ref{sec:formulations} discusses the open-domain and absorbed boundary viewpoints for interpretation. Section~\ref{sec:quasi_cycles} develops the main analytical framework: the LNA/OU reduction, Lyapunov covariance, matrix PSD, stochastic sensitivity ellipses, and the noisy-precursor indicator. 
The appendices provide detailed derivations of the PSD (Appendix~\ref{app:LNA_PSD}) alongside the $2\times 2$ Lyapunov solution and SSF formulas (Appendix~\ref{app:SSF_Lyapunov_2x2}).
They also contain explicit covariance closures (Appendix~\ref{app:covariance_closures}), 
the Jacobian at the coexistence equilibrium (Appendix~\ref{app:Jacobian_K3}), 
and technical proofs for the covariance-structure results (Appendix~\ref{app:covariance_structure_proofs}).

\section{Literature review}
\label{sec:literature_review}

\subsection{The paradox of enrichment and Hopf bifurcations in predator--prey models}

The theoretical prediction that nutrient enrichment can destabilize predator--prey coexistence dates to the seminal work of Rosenzweig \citep{rosenzweig_paradox_1971}. 
He showed that increasing carrying capacity pushes the interior equilibrium through a Hopf bifurcation, triggering limit-cycle oscillations of growing amplitude. 
This observation, termed the paradox of enrichment, was formalized within the R--M model \citep{hauzy_confronting_2013,jansen_regulation_1995}, whose type-II functional response provides the minimal mechanistic skeleton needed to produce the bifurcation.  Subsequent analytical studies established the supercritical character of the Hopf bifurcation \citep{kuznetsov_elements_2023,lin_bifurcation_2023}, while Bazykin et al. \citep{bazykin_nonlinear_1998} explored additional codimension-two degeneracies and global bifurcation structures in enriched predator--prey systems. Related stability-and-threshold analyses for feedback-regulated structured population models (including nonlocal operators and reversible transitions) further illustrate how coupling architecture determines global well-posedness and bifurcation structure; see \citep{liang_global_2025}.

\subsection{Demographic stochasticity, diffusion approximations, and the CME framework}

Stochastic formulations of population dynamics have a long history, beginning with the birth--death process models of Feller \citep{feller_grundlagen_1939} and Bartlett \citep{bartlett_stochastic_1970}, and the systematic development of the CME framework for reaction networks \citep{mcquarrie_stochastic_1967,gillespie_markov_1992}. 
In the ecological setting, demographic stochasticity is the randomness intrinsic to integer-valued population processes at finite abundance.
It serves as the primary source of internal noise. 
Representing the population dynamics as a CTMC on a discrete state space provides a rigorous probabilistic description using transition rates derived from mechanistic birth, death, and interaction events.
The moments and distributions of this model can then be related to macroscopic dynamics through systematic approximation procedures
\citep{kurtz_solutions_1970,kurtz_limit_1971,ethier_markov_1986,anderson_stochastic_2015,anderson_continuous_2011}.

The diffusion approximation was formalized by Kurtz's limit theorems \citep{kurtz_solutions_1970,kurtz_limit_1971,kurtz_strong_1978}.
This approximation replaces the discrete CTMC with a continuous-path It\^o SDE.
The drift of this SDE coincides with the deterministic rate equations.
Furthermore, the diffusion covariance is determined by the stoichiometry and intensities of the underlying events. 
This approximation becomes increasingly accurate as the system-size parameter $\Omega$ grows. 
The resulting density-level SDE provides the starting point for the LNA and for the stochastic sensitivity analyses developed in this paper.
van Kampen's system-size expansion \citep{van_kampen_stochastic_2011} and related $\Omega$-expansions \citep{gardiner_stochastic_2009,grima_effective_2010,grima_construction_2011,thomas_rigorous_2012} provide formal perturbative frameworks within which the LNA emerges as the leading-order Gaussian approximation.

For exact stochastic simulation, Gillespie's SSA \citep{gillespie_exact_1977,gillespie_general_1976,wang_analysis_2025-1} and its $\tau$-leaping variants \citep{gillespie_approximate_2001,cao_efficient_2006} generate statistically exact realizations of the CTMC. These serve as the ground-truth benchmark against which analytical approximations such as the LNA can be validated.

\subsection{Quasi-cycles, stochastic amplification, and the LNA}

Bartlett \citep{bartlett_stochastic_1970} and Nisbet and Gurney \citep{nisbet_modelling_1982} identified noise-induced quasi-periodic oscillations near a stable focus in an ecological context.
This phenomenon, now commonly called a quasi-cycle, was later given a modern spectral characterization through the LNA by McKane and Newman \citep{mckane_predator-prey_2005} and Boland et al. \citep{boland_how_2008}.
The linearized dynamics near a stable spiral point act as a narrow-band filter.
These dynamics selectively amplify white demographic noise near the damped oscillation frequency, producing a coherent spectral peak that diverges in height as the Hopf bifurcation is approached.
Complementary spectral threshold frameworks for coupled systems have been developed via Laplacian eigenmode reduction in hybrid PDE--ODE microenvironments, yielding explicit trace/determinant criteria for unstable modes; see \citep{yu_mode-wise_2026}.

This amplification mechanism has been studied extensively in both ecological and epidemiological settings. 
In ecology, quasi-cycles have been analyzed in predator--prey \citep{mckane_predator-prey_2005,pineda-krch_tale_2007,black_stochastic_2012,barraquand_moving_2017} and spatially extended metapopulation frameworks \citep{lugo_quasicycles_2008,mckane_stochastic_2014,butler_robust_2009,biancalani_stochastic_2010,biancalani_stochastic_2011}.
In epidemiology, the closely analogous phenomenon of stochastic resonance in SIR-type models has been studied by Alonso et al. \citep{alonso_stochastic_2007}, Rozhnova and Nunes \citep{rozhnova_stochastic_2010}, and Black and McKane \citep{black_stochastic_2010}, among others. 
A unifying perspective on noise amplification near bifurcations in low-dimensional stochastic systems has been provided by several reviews \citep{mckane_stochastic_2014,boland_limit_2009,allen_introduction_2010}.

The LNA framework yields closed-form expressions for both the stationary covariance (via the Lyapunov equation) and the PSD (via the transfer-function) of the fluctuation process \citep{van_kampen_stochastic_2011,gardiner_stochastic_2009,risken_fokker-planck_1996}.  In two-dimensional predator--prey models, these quantities can be computed explicitly as functions of the Jacobian $J$ at the coexistence equilibrium and the local diffusion covariance $D_*$, as we exploit in Section~\ref{sec:quasi_cycles}. 
The dependence of the PSD on $J$ and the distance to the Hopf threshold has been thoroughly studied.
In contrast, the role of the noise covariance $D_*$, particularly its off-diagonal structure, has received comparatively less attention in the ecological quasi-cycle literature.
The present paper aims to fill this research gap. 

\subsection{Stochastic sensitivity and confidence-ellipse geometry}

The SSF approach is based on the quasipotential theory \citep{freidlin_random_1998,dembo_large_2010}. 
Milshtein and Ryashko \citep{milshtein_first_1995}  developed this framework, which was subsequently extended by Bashkirtseva and Ryashko \citep{bashkirtseva_stochastic_2004,bashkirtseva_sensitivity_2000,bashkirtseva_stochastic_2013,bashkirtseva_sensitivity_2011}.
The method provides a geometric interpretation of noise-induced variability around deterministic attractors.
For stable equilibria, the SSF reduces to the stationary covariance of the LNA, and the associated confidence ellipses quantify the size, shape, and orientation of the fluctuation cloud \citep{bashkirtseva_sensitivity_2011,ryashko_sensitivity_2018}.

In the context of predator--prey and related models, stochastic sensitivity methods have been used to study noise-induced transitions between coexisting attractors \citep{alexandrov_noise-induced_2018} and extinction risk amplification near bifurcation thresholds \citep{bashkirtseva_noise-induced_2016}.
Furthermore, these methods have explored noise-induced transfer to chaotic regimes \citep{bashkirtseva_sensitivity_2005} and the geometry of fluctuations around limit cycles \citep{bashkirtseva_sensitivity_2000,bashkirtseva_stochastic_2004}.  
The connection between ellipse orientation and mechanistic noise coupling, however, has not been explored systematically in the ecological literature. 
Most applications employ diagonal or scalar-multiplied noise, so that the SSF ellipse orientation is governed entirely by the Jacobian rather than reflecting event-level stoichiometric structure. 

\subsection{Early-warning signals and noisy precursors of bifurcations}

The idea that statistical signatures in stochastic time series can provide advance warning of approaching tipping points has generated intense interest. 
This research area has expanded significantly over the past two decades \citep{scheffer_early-warning_2009,scheffer_anticipating_2012,lenton_tipping_2008}. 
Generic early-warning indicators have been proposed as model-independent precursors of fold and transcritical bifurcations \citep{dakos_slowing_2008,held_detection_2004,wissel_universal_1984}. 
These indicators include increases in autocorrelation, variance, and return time, a phenomenon known as critical slowing down. 
For Hopf bifurcations specifically, the growing coherence and amplitude of quasi-cycles in the pre-bifurcation regime provide an additional spectral indicator \citep{lade_early_2012,dutta_robustness_2018}.

Practical deployment of early-warning indicators faces several challenges: distinguishing genuine warning signals from statistical artefacts, accounting for non-stationary forcing, and selecting appropriate indicator statistics and sliding-window parameters \citep{dakos_resilience_2015,ditlevsen_tipping_2010,lenton_early_2011}. 
In the predator--prey context, several authors have examined whether variance-based indicators can reliably forecast the onset of critical transitions \citep{krkosek_signals_2014,chen_eigenvalues_2019,oregan_theory_2013}. The stochastic sensitivity ellipse provides a natural geometric framework for such indicators, as it encodes both the magnitude and the directional structure of fluctuations.  The noisy-precursor indicator $\Pi_p$ introduced in this paper (Section~\ref{sec:quasi_cycles}) operationalizes this idea by comparing the principal semi-axis of the confidence ellipse to a characteristic distance from the equilibrium to an extinction-relevant boundary.
Related threshold-based classifications of precursor structure also arise in spatially extended hybrid PDE--ODE systems, where bidirectional chemotactic feedback induces explicit patterning thresholds and resistance-niche formation; see \citep{yu_chemotactic_2026}.

\subsection{Reaction--diffusion patterning and multiscale feedback}

Pattern-forming instabilities in biological systems provide a complementary
perspective on how structural feedback reshapes macroscopic dynamics.
For example, hybrid PDE--agent-based models have demonstrated that
bidirectional endothelial--TAF feedback can generate Turing-type vascular
patterns through reaction--diffusion instability, producing spatially
heterogeneous perfusion and drug-resistant niches \citep{liu_bidirectional_2025}.
Such studies highlight how linear stability thresholds, coupling structure,
and multiscale integration determine emergent spatial organization.
Our focus is temporal stochastic amplification near a Hopf bifurcation. 
However, both frameworks emphasize that structural feedback from spatial Turing mechanisms or stochastic covariance coupling governs the geometry of emergent patterns in complex biological systems.
Related reproducible structured-PDE frameworks in biological systems combine nonlocal operators and threshold effects to quantify regime changes under feedback-like transitions (e.g., dedifferentiation); see \citep{wang_damage-structured_2026}.

\subsection{Covariance structure and noise modeling in stochastic ecology}

Despite the mechanistic clarity of the CTMC/CME-derived diffusion covariance, many stochastic analyses in ecology employ simplified noise structures. A common approach is additive white noise with diagonal (component-independent) covariance \citep{roughgarden_simple_1975,tian_additive_2017,wang_time-delay-induced_2018}, or multiplicative noise proportional to population abundance but still diagonal in the species block \citep{wu_stochastic_2019,pal_nonequilibrium_2025,qi_threshold_2021}. Environmental (extrinsic) stochasticity models likewise typically assume independent or at most correlated-Gaussian perturbations to parameters rather than mechanistically derived covariance \citep{wu_stochastic_2019,pal_nonequilibrium_2025,qi_threshold_2021}.

The importance of correctly specifying the noise covariance structure in population models has been highlighted in several contexts.  Black and McKane \citep{black_stochastic_2012} showed that mechanistic noise derivation is essential for accurate quasi-cycle predictions in predator--prey systems.  Gillespie \citep{gillespie_stochastic_2007} emphasized that the ``chemical Langevin equation'' inherits its noise structure from the CME and that most Langevin equations adopt ad hoc simplifications.
Grima \citep{grima_construction_2011} developed higher-order corrections beyond the LNA that further highlight the sensitivity of moment approximations to covariance modelling.  In the chemical physics literature, the structural relationship between stoichiometry and diffusion covariance is well established \citep{ethier_markov_1986,schnoerr_approximation_2017}, but its consequences for ecological diagnostics near bifurcation thresholds have not been explored in comparable detail.

The general principle that drift equivalence does not imply covariance equivalence is mathematically elementary.
This is because the drift depends on the first moments of channel increments, while the covariance depends on the second moments (see Appendix~\ref{app:covariance_structure_proofs}).
Despite its simplicity, the practical implications of this principle for ecological inference remain underappreciated. 
Two stochastic models sharing the same deterministic backbone can produce quantitatively different stationary covariances, spectral signatures, and stochastic sensitivity geometries simply because their event-level noise coupling differs. 
This paper provides a controlled demonstration of these effects in the canonical R--M setting.

\subsection{Positioning of the present work}

The present paper integrates and extends several of the threads surveyed above.  Its contributions relative to the existing literature can be summarized as follows.  First, we provide a systematic, ``fair-comparison'' framework in which the sole modeling variable is the predation-induced off-diagonal covariance, while all other model ingredients are held fixed. 
Second, we propagate the full non-diagonal diffusion covariance through the complete LNA diagnostic chain.
This chain includes the Lyapunov covariance, the matrix PSD, the SSF ellipse, and the noisy-precursor indicator.
We do not stop at the PSD level, which is a common limitation in the quasi-cycle literature. 
Third, we introduce the dimensionless noisy-precursor indicator $\Pi_p$ as a compact scalar diagnostic that synthesizes fluctuation magnitude, geometry, and proximity to extinction boundaries. 
This provides quantitative evidence that covariance-structure effects on near-Hopf diagnostics are more than theoretical artefacts.
Taken together, these contributions clarify the role of mechanistic noise coupling in shaping the stochastic signatures that precede ecological regime shifts.

\section{Deterministic R--M backbone and enrichment regimes}
\label{sec:deterministic}

This section records the deterministic R--M structure used to organize the stochastic diagnostics developed later. We only retain the ingredients needed for the near-Hopf analysis: the nondimensional model, the coexistence equilibrium, its feasibility conditions, the enrichment-driven Hopf threshold, and the resulting parameter partition.

We consider the nondimensional R--M predator--prey system with Holling type~II predation:
\begin{equation}\label{eq:RM_ODE}
\begin{cases}
\dfrac{dN}{dt} = N\!\left(1-\dfrac{N}{k}\right) - \dfrac{mNP}{1+N}, \\[8pt]
\dfrac{dP}{dt} = P\!\left(-c + \dfrac{mN}{1+N}\right),
\end{cases}
\qquad (N(t),P(t))\in Q \coloneqq \{(N,P):N\ge 0,\ P\ge 0\},
\end{equation}
where \(N\) and \(P\) denote prey and predator densities, respectively. The parameter \(k>0\) is the (scaled) prey carrying capacity, \(c>0\) is the predator mortality parameter, and \(m>0\) is the maximal predation/assimilation parameter under the present nondimensionalization.

The biologically relevant state space is the nonnegative quadrant \(Q\), with interior
\begin{equation}\label{eq:positive_quadrant}
D \coloneqq (0,\infty)^2.
\end{equation}
The stochastic diffusion models studied later will use the same state-space geometry, but with different boundary interpretations (open-domain versus absorbed viewpoints; Section~\ref{sec:formulations}).

For Coexistence equilibrium and feasibility. System~\eqref{eq:RM_ODE} admits the boundary equilibria
\[
K_1=(0,0),\qquad K_2=(k,0),
\]
and the coexistence equilibrium
\begin{equation}\label{eq:K3}
K_3=(N^*,P^*)=
\left(
\frac{c}{m-c},\;
\frac{k(m-c)-c}{k(m-c)^2}
\right).
\end{equation}
The coexistence equilibrium \(K_3\) lies in \(D\) if and only if
\begin{equation}\label{eq:existence_K3}
m>c
\qquad\text{and}\qquad
k(m-c)>c.
\end{equation}
The condition \(m>c\) ensures that predator per-capita growth can become nonnegative at sufficiently high prey density, while \(k(m-c)>c\) is exactly the positivity condition \(P^*>0\).

For enrichment-driven Hopf threshold. Linearization of~\eqref{eq:RM_ODE} at \(K_3\) yields the classical enrichment-driven Hopf threshold
\begin{equation}\label{eq:Hopf_threshold}
k_H=\frac{m+c}{m-c},
\end{equation}
in the parameter regime where \(K_3\in D\). Thus, in the standard R--M setting,
\[
k<k_H \;\Rightarrow\; K_3 \text{ is locally asymptotically stable},
\qquad
k>k_H \;\Rightarrow\; K_3 \text{ is unstable},
\]
with a Hopf bifurcation as \(k\) crosses \(k_H\). This threshold is the deterministic backbone for the near-Hopf stochastic amplification and quasi-cycle diagnostics studied in this paper.

Parameter partition for stochastic analysis. We use the Hopf threshold~\eqref{eq:Hopf_threshold} to define the parameter regions
\begin{equation}\label{eq:Lambda_regions}
\begin{aligned}
\Lambda_2 &\coloneqq \{(m,c,k): k<k_H\}
\quad \text{(deterministically stable coexistence regime)},\\[4pt]
\Lambda_1 &\coloneqq \{(m,c,k): k>k_H\}
\quad \text{(post-Hopf oscillatory regime)}.
\end{aligned}
\end{equation}
The present paper focuses primarily on \(\Lambda_2\), especially parameter values with \(k\uparrow k_H\), where deterministic coexistence remains stable but demographic noise can generate pronounced quasi-cycles and noisy precursors. The post-Hopf regime \(\Lambda_1\) is included mainly for comparisons.

In this paper, the deterministic partition mainly serves as the organizing backbone for stochastic diagnostics.

\section{Stochastic diffusion model and covariance structures}
\label{sec:stochastic_model}

This section states the stochastic diffusion models used throughout the paper in a self-contained form, with emphasis on the covariance structures that drive the near-Hopf diagnostics. Our focus here is not to re-derive the full CTMC/CME construction in detail, but to provide reproducible model definitions for (i) a mechanistically consistent full-covariance demographic diffusion and (ii) a drift-matched diagonal-noise comparator. 

For Mechanistically derived demographic diffusion. Let
\[
x_t=(N_t,P_t)^\top \in \mathbb{R}_+^2
\]
denote prey and predator densities, and let \(\Omega>0\) be the system-size parameter (population scale). We use the density-level It\^o SDE
\begin{equation}\label{eq:stoch_model_SDE}
dx_t=b(x_t)\,dt+\Sigma(x_t)\,dW_t,
\end{equation}
where \(W_t\) is a standard Brownian motion (of dimension determined by the chosen factorization), and the diffusion covariance is
\begin{equation}\label{eq:stoch_model_cov_general}
a(x):=\Sigma(x)\Sigma(x)^\top=\frac{1}{\Omega}\,S\,\operatorname{diag}(f(x))\,S^\top.
\end{equation}
Here \(S\) is a stoichiometric matrix and \(f(x)\) is a vector of density-level event intensities associated with a chosen mechanistic closure. Equation~\eqref{eq:stoch_model_cov_general} is the key modeling identity: the diffusion covariance is inherited from event stoichiometry and channel intensities.

Throughout this paper we use the reduced nondimensional R--M drift
\begin{equation}\label{eq:stoch_model_drift}
b(x)=
\begin{pmatrix}
N\!\left(1-\dfrac{N}{k}\right)-\dfrac{mNP}{1+N}\\[8pt]
P\!\left(-c+\dfrac{mN}{1+N}\right)
\end{pmatrix},
\qquad x=(N,P)^\top.
\end{equation}
This is the same deterministic backbone as in Section~\ref{sec:deterministic}. In the present paper, the stochastic modeling question is how different covariance structures \(a(x)\) (with the same drift~\eqref{eq:stoch_model_drift}) affect near-Hopf second-order diagnostics and agreement with SSA.

For full-covariance closure and diagonal-noise comparator. We compare two diffusion closures that share the same deterministic drift but differ in predation-induced covariance structure:
\begin{enumerate}[label=(\roman*)]
\item Full-covariance model (mechanistic default): predation and predator conversion are coupled at the event level, yielding a nonzero prey--predator noise cross-covariance \(a_{12}(x)\), typically negative.
\item Diagonal-noise comparator (drift-matched baseline): predation-related prey loss and predator gain are represented by split channels whose covariance contribution is diagonal in the prey--predator block, while the deterministic drift and marginal predation intensities are matched.
\end{enumerate}

To make this explicit, write
\[
f_{\mathrm{pred}}(x):=\frac{mNP}{1+N}.
\]
For notational continuity with the mechanistic derivation, we temporarily include a conversion efficiency \(e\in(0,1]\). 

\medskip
\noindent\textbf{(A) Bernoulli-coupled predation--conversion mechanism (exact CTMC-induced covariance contribution).}
If each predation encounter removes one prey and produces a predator offspring with probability \(e\), then the predation contribution to the density diffusion covariance is
\begin{equation}\label{eq:apred_B_paperB}
a_{\mathrm{pred}}^{(B)}(x)
=
\frac{1}{\Omega}\,f_{\mathrm{pred}}(x)
\begin{pmatrix}
1 & -e\\
-e & e
\end{pmatrix}.
\end{equation}
This is the predation-sector covariance inherited from the exact Bernoulli-coupled integer-valued mechanism after diffusion approximation.

\medskip
\noindent\textbf{(B) Effective coupled \texorpdfstring{\((-1,e)^\top\)}{(-1,e)^\top} closure (compact diffusion representation).}
A compact diffusion-level closure replaces the Bernoulli mechanism by a single effective coupled predation channel with increment \((-1,e)^\top\), yielding
\begin{equation}\label{eq:apred_eff_paperB}
a_{\mathrm{pred}}^{(\mathrm{eff})}(x)
=
\frac{1}{\Omega}\,f_{\mathrm{pred}}(x)
\begin{pmatrix}
1 & -e\\
-e & e^2
\end{pmatrix}.
\end{equation}
This closure preserves the same predation-induced cross-covariance sign and magnitude as~\eqref{eq:apred_B_paperB}, and is used as a compact full-covariance representation.

\medskip
\noindent\textbf{(C) Split-channel diagonal-noise comparator (drift-matched baseline).}
If predation-related prey removal and predator reproduction are treated as independent channels while matching deterministic drift and marginal predation intensities, the predation contribution becomes
\begin{equation}\label{eq:apred_split_paperB}
a_{\mathrm{pred}}^{(S)}(x)
=
\frac{1}{\Omega}\,f_{\mathrm{pred}}(x)
\begin{pmatrix}
1 & 0\\
0 & e
\end{pmatrix}.
\end{equation}
This specific formulation yields zero predation-induced cross-covariance between prey and predator.

In all three cases, the full diffusion covariance \(a(x)\) is obtained by adding the predation contribution to the common birth/competition/death contributions implied by~\eqref{eq:stoch_model_cov_general}. Thus the comparator is drift-matched and differs from the full-covariance model specifically in the second-order coupling structure. Figures~\ref{fig:microscopic_mechanism} and~\ref{fig:microscopic_distinction} visualize the modeling distinctions between the full-covariance (A \& B) and the baseline comparator (C) models.

\begin{figure}[htbp]
  \centering
  \includegraphics[width=0.75\textwidth]{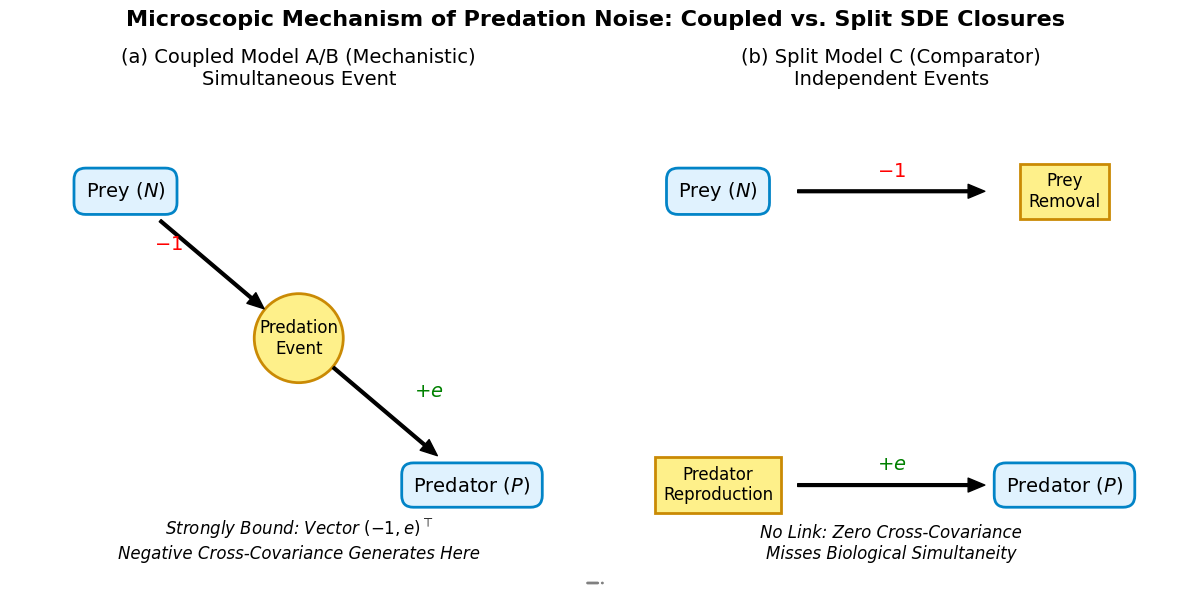}
  \caption{\textbf{Microscopic Mechanism of Predation Noise.} (a) In the coupled closures, a single predation event simultaneously reduces prey and increases predator populations, generating a structurally negative cross-covariance. (b) In the split-channel comparator, these are treated as independent events, yielding zero cross-covariance despite matching the deterministic drift.}
  \label{fig:microscopic_mechanism}
\end{figure}

\begin{figure}[htbp]
  \centering
  \includegraphics[width=0.75\textwidth]{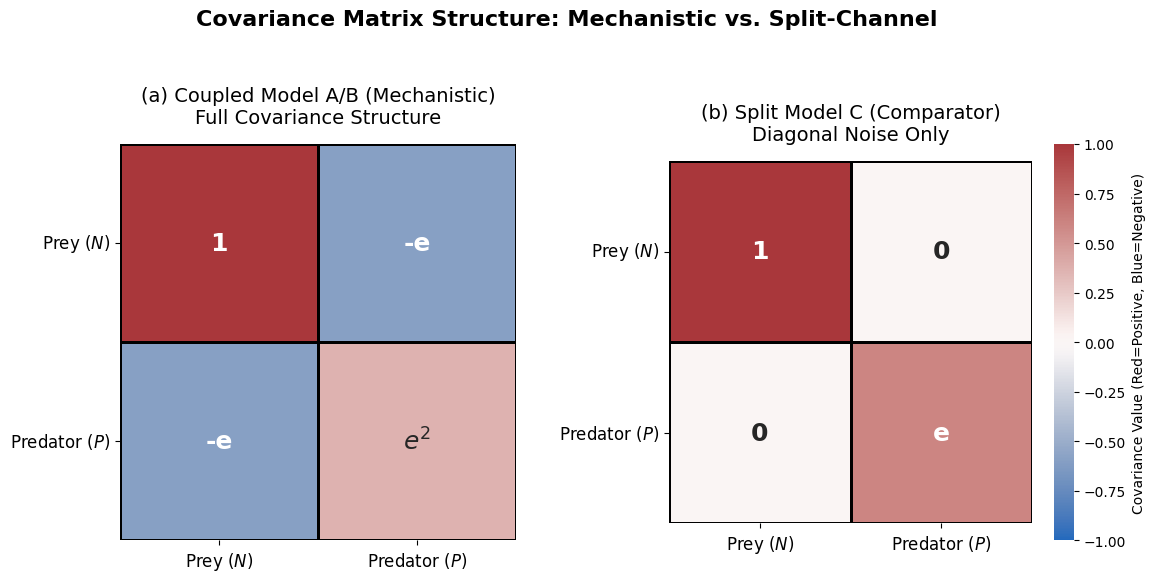}
  \caption{\textbf{Covariance Matrix Structure.} Heatmap representation of the predation contribution to the diffusion covariance. The mechanistic full-covariance model (a) exhibits negative cross-covariance (blue), whereas the split-channel comparator (b) explicitly zeroes out this off-diagonal coupling (white), preserving only the diagonal variances (red).}
  \label{fig:microscopic_distinction}
\end{figure}

Structural predation-induced cross-covariance. The key covariance distinction used throughout the paper is summarized in the following observation.

\begin{proposition}[Structural predation-induced cross-covariance]
\label{prop:structural_crosscov}
Under a coupled predation--conversion closure (Bernoulli-coupled or effective \((-1,e)^\top\) closure), the predation contribution to the diffusion covariance satisfies
\begin{equation}\label{eq:structural_crosscov_negative}
\bigl(a_{\mathrm{pred}}\bigr)_{12}(x)<0,
\qquad x\in(0,\infty)^2.
\end{equation}
In contrast, for the drift-matched split-channel comparator,
\begin{equation}\label{eq:structural_crosscov_zero}
\bigl(a_{\mathrm{pred}}^{(S)}\bigr)_{12}(x)=0,
\qquad x\in(0,\infty)^2.
\end{equation}
\end{proposition}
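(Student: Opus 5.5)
The plan is to read the off-diagonal entry directly from the general identity \eqref{eq:stoch_model_cov_general}, $a(x)=\frac{1}{\Omega}S\,\operatorname{diag}(f(x))\,S^\top$. Restricted to the predation sector, this gives $(a_{\mathrm{pred}})_{12}(x)=\frac{1}{\Omega}\sum_{j\in\mathrm{pred}} f_j(x)\,S_{1j}S_{2j}$. So the sign of the cross-covariance is the sign of the intensity-weighted sum of products of prey and predator increments over predation channels. I will verify the two claims by identifying the predation channels and their stoichiometric columns in each closure, and then check the displayed matrices \eqref{eq:apred_B_paperB}, \eqref{eq:apred_eff_paperB}, \eqref{eq:apred_split_paperB}.

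\textbf{Step 1 (positivity of the intensity).} For $x=(N,P)\in(0,\infty)^2$ and $m>0$, we have $f_{\mathrm{pred}}(x)=mNP/(1+N)>0$, since $N,P>0$ and $1+N>0$. This is the only place the restriction to the open quadrant enters. On the boundary, $f_{\mathrm{pred}}$ vanishes and the strict inequality fails, which is why the statement excludes it.

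\textbf{Step 2 (coupled closures).} For the effective closure there is a single channel with increment $(-1,e)^\top$ and rate $f_{\mathrm{pred}}$. The second-moment matrix of the increment is $\begin{pmatrix}1&-e\\-e&e^2\end{pmatrix}$, so $(a^{(\mathrm{eff})}_{\mathrm{pred}})_{12}=-e\,f_{\mathrm{pred}}/\Omega$.

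For the Bernoulli mechanism, I split the encounter into two channels. The first has increment $(-1,1)^\top$ and rate $e f_{\mathrm{pred}}$ (conversion succeeds). The second has increment $(-1,0)^\top$ and rate $(1-e)f_{\mathrm{pred}}$ (conversion fails). Equivalently, I compute $\mathbb{E}[\xi\xi^\top]$ for the random increment $\xi=(-1,\beta)$ with $\beta\sim\mathrm{Bernoulli}(e)$, using $\beta^2=\beta$. Either way the cross term is $(-1)(1)\,e f_{\mathrm{pred}}+(-1)(0)(1-e)f_{\mathrm{pred}}=-e f_{\mathrm{pred}}$, and the diagonal entries are $f_{\mathrm{pred}}$ and $e f_{\mathrm{pred}}$, reproducing \eqref{eq:apred_B_paperB}.

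In both cases $(a_{\mathrm{pred}})_{12}=-e f_{\mathrm{pred}}(x)/\Omega<0$, using $e\in(0,1]$, $\Omega>0$ and Step 1. This gives \eqref{eq:structural_crosscov_negative}.

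\textbf{Step 3 (split comparator).} Here there are two independent channels with columns $(-1,0)^\top$ at rate $f_{\mathrm{pred}}$ and $(0,1)^\top$ at rate $e f_{\mathrm{pred}}$. Each column has a zero in one component, so every product $S_{1j}S_{2j}$ vanishes and $(a^{(S)}_{\mathrm{pred}})_{12}\equiv 0$. This gives \eqref{eq:structural_crosscov_zero}. I will also note that drift matching holds: the summed mean increments are $(-f_{\mathrm{pred}},e f_{\mathrm{pred}})$ in all three closures.

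\textbf{Main obstacle.} The computation itself is elementary. The step needing care is the Bernoulli case: one must justify that the diffusion limit of a randomized-increment event is governed by $\mathbb{E}[\xi\xi^\top]$ times the rate. This means stating explicitly that a randomized jump is equivalent, in the CTMC, to the thinned pair of deterministic-increment channels, so that \eqref{eq:stoch_model_cov_general} applies channelwise.
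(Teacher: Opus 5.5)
Your proposal is correct and follows essentially the same route as the paper: the main-text proof reads the $(1,2)$ entry $-\frac{1}{\Omega}e\,\frac{mNP}{1+N}$ off the displayed predation covariances, and Appendix~\ref{app:covariance_structure_proofs} derives those matrices exactly as you do, from the channel outer products of the thinned Bernoulli pair $(-1,1)^\top,(-1,0)^\top$ (rates $e f_{\mathrm{pred}}$ and $(1-e)f_{\mathrm{pred}}$), the single effective channel $(-1,e)^\top$, and the split pair $(-1,0)^\top,(0,1)^\top$. Strictness comes from the same facts as in Proposition~\ref{prop:structural_negative_crosscov}, namely $e\in(0,1]$, $m,\Omega>0$ and $N,P>0$.
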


\begin{proof}
From~\eqref{eq:apred_B_paperB} and~\eqref{eq:apred_eff_paperB},
\[
\bigl(a_{\mathrm{pred}}\bigr)_{12}(x)
=
-\frac{1}{\Omega}\,e\,f_{\mathrm{pred}}(x)
=
-\frac{1}{\Omega}\,e\,\frac{mNP}{1+N}<0
\quad \text{for }N,P>0.
\]
Equation~\eqref{eq:structural_crosscov_zero} follows directly from~\eqref{eq:apred_split_paperB}.
\end{proof}

Proposition~\ref{prop:structural_crosscov} provides the structural basis for the comparisons in later sections.
The LNA covariance, matrix PSD, and stochastic sensitivity ellipse all depend on the local diffusion covariance \(D_*=a(K_3)\).
Consequently, the presence or absence of this cross-covariance directly affects fluctuation geometry, spectral amplification diagnostics, and noisy-precursor indicators near the Hopf threshold.
Recent work has argued systematically that mechanistically derived predator--prey diffusions require non-diagonal demographic noise and a structurally negative prey--predator cross-covariance, and has also formalized the open-domain versus absorbed modeling bifurcation that we discuss in the next section; see \citep{yu_beyond_2026}.

\section{Open-domain and absorbed viewpoints for stochastic interpretation}
\label{sec:formulations}

The diffusion model in Section~\ref{sec:stochastic_model} can be used under two complementary global viewpoints, depending on whether the scientific objective is interior fluctuation analysis or extinction-permitting dynamics. In this paper, this distinction is primarily interpretive and computational: we use it to align the model formulation with the observable of interest, rather than to develop a full boundary well-posedness theory.

We first introduce the open-domain viewpoint for the interior dynamics and near-Hopf diagnostics.
In this viewpoint, the diffusion is interpreted on the interior state space
\[
D=(0,\infty)^2,
\]
and is used to describe survival-conditioned or interior stochastic dynamics near the coexistence equilibrium \(K_3\). This is the natural setting for the local analyses developed later in the paper, including:
\begin{enumerate}[label=(\roman*)]
\item LNA around \(K_3\),
\item stationary covariance (Lyapunov equation) in the Hurwitz regime,
\item matrix PSD and quasi-cycle diagnostics,
\item stochastic sensitivity (SSF) confidence ellipses and the noisy-precursor indicator.
\end{enumerate}
These diagnostics characterize fluctuation amplification and geometry before deterministic oscillatory onset, and they are inherently interior objects centered at the coexistence state.

We discuss the absorbed viewpoint for the extinction-permitting dynamics.
In this viewpoint, the same local diffusion coefficients are used up to boundary contact, but trajectories are treated as extinct (absorbed) once a boundary corresponding to loss of coexistence is reached (i.e.\ \(N=0\) and/or \(P=0\)). This viewpoint is the diffusion-level analogue of the irreversibility present in the finite-population CTMC model, where extinction is absorbing in the absence of immigration or external forcing.

The absorbed viewpoint is therefore the appropriate choice when the target observables involve extinction-permitting behavior, such as absorption probabilities, survival curves, or qualitative comparison of boundary-hitting tendencies with SSA trajectories. 
In the present paper, we use absorbed diffusion viewpoint mainly to compare against SSA in finite-population regimes and to separate extinction-related effects from the interior fluctuation diagnostics obtained under the open-domain viewpoint.
A fully mechanistic full-covariance chemical-Langevin diffusion with absorbing coordinate axes can be derived from a CTMC formulation, with strong well-posedness and moment bounds established up to absorption; see \citep{yu_full-covariance_2026}.

In summary, using both viewpoints allows us to compare stochastic signatures across modeling levels while keeping the interpretation of each diagnostic aligned with the appropriate boundary treatment.

\section{LNA near coexistence: covariance, PSD, stochastic sensitivity, and noisy precursors}
\label{sec:quasi_cycles}

This section develops the main analytical diagnostics of the paper for stochastic fluctuations near the coexistence equilibrium \(K_3\) in the deterministically stable regime \(\Lambda_2\) (Section~\ref{sec:deterministic}). The goal is to quantify how demographic noise is amplified as the enrichment-driven Hopf threshold is approached, and how the covariance structure of that noise affects the resulting quasi-cycle signatures.

A key point of novelty is the explicit propagation of a mechanistically inherited non-diagonal local diffusion covariance \(D_*=a(K_3)\) through the entire LNA diagnostic chain. In much of the quasi-cycle/LNA literature, noise is taken to be diagonal (or its covariance structure is not analyzed explicitly), which can obscure how event-level coupling shapes stochastic observables. Here, the full-covariance model induces a prey--predator cross-covariance at the diffusion level, and this enters directly into the Lyapunov covariance, matrix PSD, and stochastic sensitivity (SSF) ellipse. Consequently, covariance structure affects not only fluctuation magnitude but also fluctuation geometry, including the resulting noisy-precursor indicator.

Throughout this section, we work with the open-domain viewpoint of Section~\ref{sec:formulations} and the stochastic diffusion model of Section~\ref{sec:stochastic_model}, with deterministic drift~\eqref{eq:stoch_model_drift}. The focus is local (near \(K_3\)) and applies to the deterministically stable coexistence regime \(\Lambda_2\), especially near the Hopf threshold \(k_H\).

For LNA / OU reduction around the coexistence equilibrium. Assume the coexistence equilibrium
\[
K_3=(N^*,P^*)
\]
exists in \(D=(0,\infty)^2\) (Section~\ref{sec:deterministic}). Let
\[
J:=J(K_3)
\]
denote the Jacobian of the deterministic drift at \(K_3\), and let
\[
D_*:=a(K_3)=\Sigma(K_3)\Sigma(K_3)^\top
\]
be the local diffusion covariance matrix at \(K_3\) for the chosen noise closure (full-covariance model or diagonal comparator).

Define the fluctuation variable
\[
y_t:=x_t-K_3.
\]
Linearizing the drift around \(K_3\) and freezing the diffusion covariance at \(K_3\) yields the LNA, i.e.\ the OU SDE
\begin{equation}\label{eq:LNA_OU_main}
dy_t = J\,y_t\,dt + B\,dW_t,
\qquad BB^\top=D_*,
\end{equation}
where \(B\) is any matrix factorization of \(D_*\).

In the deterministically stable regime \(\Lambda_2=\{k<k_H\}\), the coexistence equilibrium \(K_3\) is locally asymptotically stable, hence \(J\) is Hurwitz (equivalently, in two dimensions, \(\operatorname{tr}(J)<0\) and \(\det(J)>0\)). Therefore, the OU process~\eqref{eq:LNA_OU_main} admits a unique stationary Gaussian distribution with zero mean and covariance matrix \(W\).

The LNA is a local approximation centered at \(K_3\), and is most informative in \(\Lambda_2\) near \(k_H\), where deterministic attraction persists but stochastic amplification is strong. In particular, the stationary covariance/PSD/SSF diagnostics developed below are defined only in the Hurwitz regime \(\Lambda_2\). In the post-Hopf regime \(\Lambda_1\), \(J(K_3)\) is not Hurwitz and one should not impose a stationary OU interpretation around \(K_3\).

For stationary covariance via the continuous Lyapunov equation.
Let
\[
W \coloneqq \mathbb{E}[y_t y_t^\top]
\]
denote the stationary covariance matrix of the OU process~\eqref{eq:LNA_OU_main} (when \(J\) is Hurwitz). Then \(W\) is the unique symmetric positive definite solution of the continuous Lyapunov equation
\begin{equation}\label{eq:LNA_Lyapunov_main}
J W + W J^\top + D_* = 0.
\end{equation}
This matrix equation is the central covariance object for the stochastic sensitivity analysis.

Since the R--M model is two-dimensional, write
\[
J=\begin{pmatrix} a & b\\ c & d \end{pmatrix},\qquad
D_*=\begin{pmatrix} q_{11} & q_{12}\\ q_{12} & q_{22} \end{pmatrix},\qquad
W=\begin{pmatrix} w_{11} & w_{12}\\ w_{12} & w_{22} \end{pmatrix}.
\]
Then~\eqref{eq:LNA_Lyapunov_main} is equivalent to the \(3\times3\) linear system
\begin{equation}\label{eq:LNA_3x3_main}
\begin{pmatrix}
2a & 2b & 0\\
c & a+d & b\\
0 & 2c & 2d
\end{pmatrix}
\begin{pmatrix}
w_{11}\\ w_{12}\\ w_{22}
\end{pmatrix}
=
-
\begin{pmatrix}
q_{11}\\ q_{12}\\ q_{22}
\end{pmatrix}.
\end{equation}
Hence \(W\) can be obtained by a robust linear solve (or by a standard Lyapunov solver). Appendix~\ref{app:SSF_Lyapunov_2x2} gives the componentwise derivation and explicit \(2\times2\) formulas.

Under the full-covariance stochastic model, \(D_*=a(K_3)\) is generally non-diagonal, reflecting mechanistic prey--predator coupling. Therefore, the stationary covariance \(W\) inherits not only the deterministic linearized dynamics \(J\) but also the noise geometry encoded in \(D_*\). This is precisely where the full-covariance and diagonal-noise models can diverge even when they share the same drift.

The LNA/OU representation also yields a closed-form matrix-valued PSD, which makes quasi-cycle amplification transparent in the frequency domain.

Let \(S(\omega)\in\mathbb{C}^{2\times2}\) denote the stationary matrix PSD of the OU process~\eqref{eq:LNA_OU_main}. Then
\begin{equation}\label{eq:PSD_matrix_main}
S(\omega)=
\bigl(J-i\omega I\bigr)^{-1}D_*\bigl(J^\top+i\omega I\bigr)^{-1},
\qquad \omega\in\mathbb{R}.
\end{equation}
A Fourier-domain derivation is given in Appendix~\ref{app:LNA_PSD}.
The diagonal entries
\[
S_{NN}(\omega),\qquad S_{PP}(\omega)
\]
are the prey and predator fluctuation spectra, while the off-diagonal entries encode cross-spectral coupling and phase information. In \(\Lambda_2\), especially for \(k\uparrow k_H\), one typically observes:
\begin{enumerate}[label=(\roman*),leftmargin=1.8em]
\item emergence and sharpening of a nonzero-frequency spectral peak (quasi-cycles),
\item increased spectral amplification near the resonant frequency as \(\operatorname{tr}(J)\uparrow 0\),
\item covariance-structure dependence of the spectral matrix through \(D_*\), not only through \(J\).
\end{enumerate}
Thus the PSD provides a direct frequency-domain signature of noisy precursors before deterministic destabilization.

At the deterministic Hopf threshold, the real parts of the eigenvalues of \(J\) cross zero. In the stable regime \(\Lambda_2\), the resolvent
\[
(J-i\omega I)^{-1}
\]
becomes increasingly amplifying near the imaginary part of the eigenvalues as \(k\uparrow k_H\), producing the characteristic quasi-cycle spectral peak. The noise covariance \(D_*\) determines how strongly different fluctuation directions are injected into this amplified linear response.

Stochastic sensitivity, confidence ellipses, and a noisy-precursor indicator. The stationary covariance \(W\) obtained from~\eqref{eq:LNA_Lyapunov_main} defines a local stochastic sensitivity geometry around \(K_3\).

For a confidence level \(p\in(0,1)\), define the (deviation-coordinate) confidence ellipse
\begin{equation}\label{eq:SSF_ellipse_main}
\mathcal{E}_p
:=
\left\{
z\in\mathbb{R}^2:\ z^\top W^{-1}z \le \chi^2_2(p)
\right\},
\end{equation}
where \(\chi^2_2(p)\) is the \(p\)-quantile of the \(\chi^2\) distribution with \(2\) degrees of freedom. This is the SSF level set in the LNA/OU approximation.

Let \(\lambda_\pm(W)\) be the eigenvalues of \(W\), ordered so that \(\lambda_+\ge \lambda_->0\). Then the semi-axis lengths of \(\mathcal{E}_p\) are
\begin{equation}\label{eq:SSF_axes_main}
\ell_\pm=\sqrt{\chi^2_2(p)\,\lambda_\pm(W)}.
\end{equation}
The principal-axis orientation is determined by the eigenvectors of \(W\) (equivalently, by the standard \(2\times2\) angle formula; Appendix~\ref{app:SSF_Lyapunov_2x2}).

In the full-covariance model (Model A/B in Section~\ref{sec:stochastic_model}), the non-diagonal \(D_*\) influences \(W\) through the Lyapunov equation~\eqref{eq:LNA_Lyapunov_main}. As a result, the SSF ellipse can exhibit a pronounced tilt relative to the coordinate axes, reflecting prey--predator fluctuation coupling inherited from event-level demographic mechanisms. A diagonal-noise comparator (Model C in Section~\ref{sec:stochastic_model}) may reproduce some marginal variances while still misrepresenting ellipse orientation and anisotropy. Figure~\ref{fig:sensitivity_ellipses} visualizes such a distinction between the full-covariance model and the diagonal comparator.

\begin{figure}[htbp]
  \centering
  \includegraphics[width=0.6\textwidth]{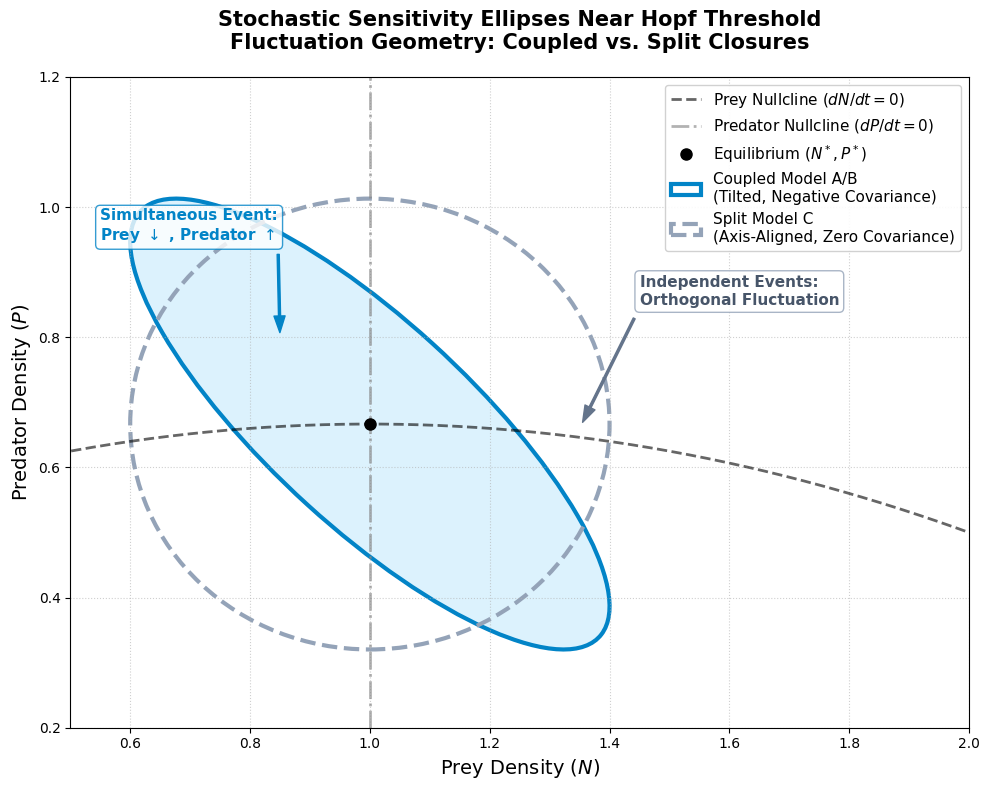}
  \caption{\textbf{Stochastic Sensitivity Ellipses Near the Hopf Threshold.} Fluctuation geometry driven by different diffusion closures. The fully coupled mechanistic model (solid blue) yields a tilted ellipse due to negative cross-covariance, indicating that unexpected prey decreases often coincide with predator increases. The split-channel comparator (dashed gray) incorrectly predicts orthogonal, independent fluctuations.}
  \label{fig:sensitivity_ellipses}
\end{figure}

To quantify how close typical fluctuations are to an extinction-relevant or basin-separating set, let \(\mathcal S\) denote a critical set in deviation coordinates (e.g.\ a local separatrix approximation or extinction-threshold manifold), and define
\[
d_{\mathrm{sep}} \coloneqq \inf_{z\in\mathcal{S}}\|z\|.
\]
We use the dimensionless indicator
\begin{equation}\label{eq:precursor_indicator_main}
\Pi_p \coloneqq \frac{\ell_+}{d_{\mathrm{sep}}}.
\end{equation}
The interpretations include:
\begin{enumerate}[label=(\roman*),leftmargin=1.8em]
\item \(\Pi_p\ll1\): typical fluctuations remain well inside a local safe neighborhood;
\item \(\Pi_p\approx1\): noisy-precursor regime, where confidence-level fluctuations reach the critical set;
\item \(\Pi_p>1\): substantial noise-driven excursions become plausible despite deterministic local stability (\(k<k_H\)).
\end{enumerate}

Figure~\ref{fig:time_series_hopf} shows representative time-series data near the Hopf bifurcation, where Figure~\ref{fig:time_series_hopf}(a) corresponds to the deterministic system with oscillations decaying to the steady state. Figure~\ref{fig:time_series_hopf}(b) demonstrates the system driven by demographic noise, and such noise sustains the amplified quasi-cycles.

\begin{figure}[htbp]
  \centering
  \includegraphics[width=0.65\textwidth]{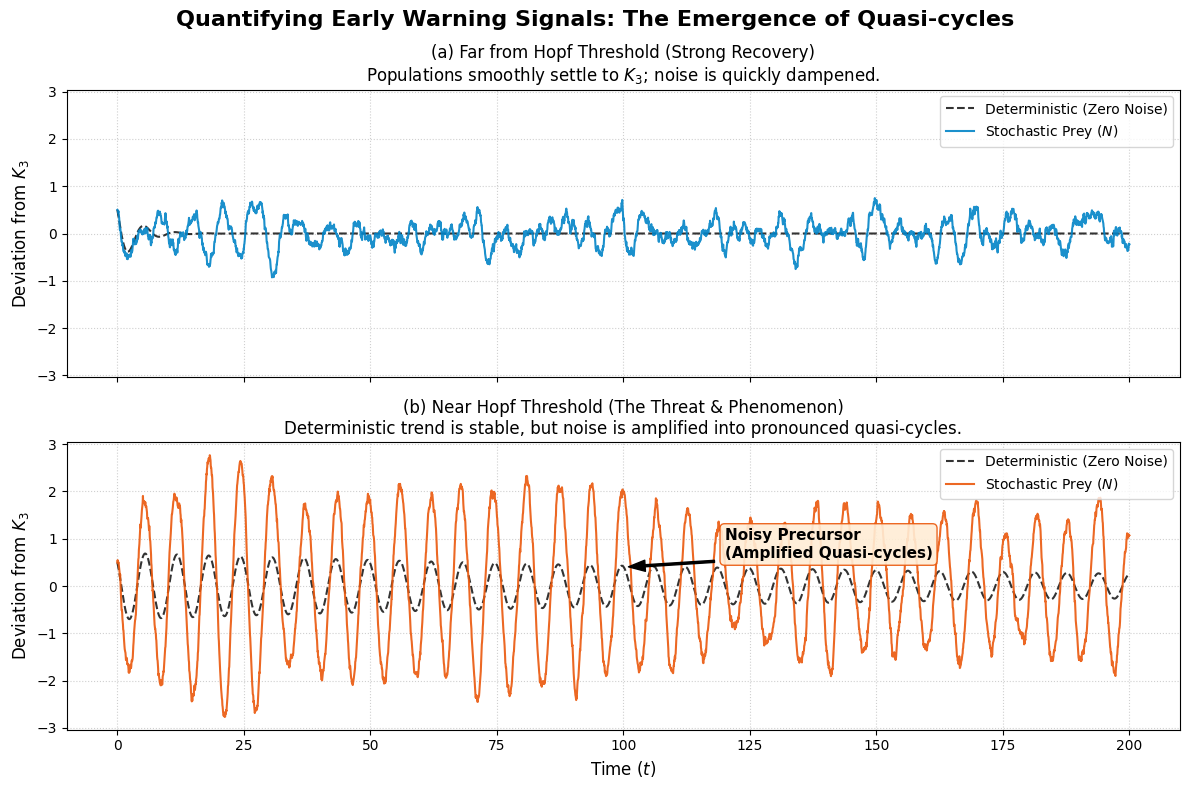}
  \caption{\textbf{Time-Series Near the Hopf Bifurcation.} Simulation of population fluctuations using the linear noise approximation. (Top) The fully coupled mechanistic model (with negative cross-covariance) generates clear, amplified quasi-cycles, serving as a reliable noisy-precursor indicator of the impending Hopf bifurcation. (Bottom) The split-channel comparator (diagonal noise only) fails to effectively excite these resonant cycles, resulting in a muted, random appearance that obscures the early warning signals of system destabilization.}
  \label{fig:time_series_hopf}
\end{figure}

\section{Discussion and conclusion}

This work clarifies a subtle but practically consequential point for stochastic ecological modeling near enrichment-driven Hopf bifurcations: matching the deterministic drift does not guarantee matching the second-order stochastic diagnostics. Starting from a CTMC/CME description of discrete birth--death--predation events, we derived a mechanistically consistent diffusion approximation whose tensor inherits stoichiometric coupling. In the R--M modeling setting, predation is a same-event coupling between prey loss and (probabilistic) predator gain, which induces a structurally negative prey--predator cross-covariance in the local diffusion matrix. We contrasted this full-covariance closure with a drift-matched diagonal-noise comparator that splits predation-related channels and therefore removes the off-diagonal term while preserving the deterministic backbone and marginal noise intensities.

Propagating this single covariance-structure change through the full LNA diagnostic chain revealed systematic differences in near-Hopf behavior. In the stable coexistence regime ($k<k_H$), the OU reduction shows that stationary covariance, matrix-valued PSD, and stochastic sensitivity ellipses depend explicitly on the local diffusion covariance $D_*=a(K_3)$, not just on the Jacobian $J(K_3)$. Consequently, the mechanistic cross-covariance reshapes both the magnitude and the geometry of fluctuations: it alters ellipse orientation/anisotropy, modifies cross-spectral structure, and changes how demographic noise is injected into the resonant linear response that produces quasi-cycles. The dimensionless noisy-precursor indicator $\Pi_p$ further synthesizes these effects by comparing the dominant SSF semi-axis to an extinction-relevant distance scale, making explicit that covariance geometry can shift the inferred proximity to boundary-hitting risk even when deterministic stability persists.

Overall, the main conclusion is that event-level noise geometry is a first-order modeling ingredient for pre-bifurcation inference: simplifying predation to independent channels can misrepresent fluctuation coupling, distort spectral amplification signatures, and bias early-warning or extinction-relevant diagnostics. Methodologically, the paper integrates bifurcation theory with mechanistic diffusion approximations and matrix-valued LNA tools (Lyapunov covariance, PSD, SSF) to provide a controlled, fair-comparison template for assessing noise-structure effects. Substantively, it strengthens the case for stoichiometrically consistent, full-covariance demographic diffusions as the appropriate stochastic counterpart of classical predator--prey theory when finite-population fluctuations and near-threshold warning signals are of interest.

\section*{Use of AI tools declaration}
The authors declare they have not used Artificial Intelligence (AI) tools in the creation of this article.

\section*{Conflict of interest}
The authors declare there are no conflicts of interest.

\begin{appendices}

\section{LNA matrix PSD: Fourier-domain derivation}
\label{app:LNA_PSD}

This appendix derives the matrix-valued PSD formula used in
Section~\ref{sec:quasi_cycles} for the LNA near the coexistence equilibrium.
We make the Fourier convention explicit and record the symmetry properties.

\subsection{OU form of the LNA and stationarity conditions}
\label{app:LNA_PSD_OU_setup}

In the LNA, fluctuations \(y(t)\in\mathbb R^2\) around the coexistence equilibrium satisfy the OU SDE
\begin{equation}\label{eq:appB_OU}
dy(t)=J\,y(t)\,dt+B\,dW(t),
\end{equation}
where
\begin{enumerate}[label=(\roman*)]
\item \(J\in\mathbb R^{2\times2}\) is the Jacobian matrix evaluated at the coexistence equilibrium,
\item \(B\in\mathbb R^{2\times r}\) is any matrix factorization of the local diffusion covariance \(D_*\), i.e.
\[
D_* = BB^\top,
\]
\item \(W(t)\in\mathbb R^r\) is a standard \(r\)-dimensional Brownian motion.
\end{enumerate}

The PSD formula in the main text is a stationary OU result and therefore applies only when \(J\) is Hurwitz (all eigenvalues have strictly negative real parts). In the present paper, this corresponds to the deterministically stable coexistence regime \(\Lambda_2\) (Section~\ref{sec:deterministic}). If \(J\) is not Hurwitz (e.g.\ in the post-Hopf regime \(\Lambda_1\)), the stationary covariance and stationary PSD around the coexistence equilibrium are not defined.

\subsection{Fourier transform convention and matrix PSD definition}
\label{app:LNA_PSD_convention}

We use the Fourier transform convention
\begin{equation}\label{eq:appB_FT_def}
\widehat{u}(\omega)
=
\int_{-\infty}^{\infty} e^{-i\omega t}u(t)\,dt,
\qquad
u(t)=\frac{1}{2\pi}\int_{-\infty}^{\infty} e^{i\omega t}\widehat{u}(\omega)\,d\omega.
\end{equation}

For a weakly stationary vector process \(y(t)\), define the covariance function
\begin{equation}\label{eq:appB_covfun}
R(\tau):=\mathbb E\!\big[y(t+\tau)y(t)^\top\big],
\end{equation}
which is independent of \(t\). The matrix-valued PSD is the Fourier transform of \(R(\tau)\):
\begin{equation}\label{eq:appB_PSD_def}
S(\omega)=\int_{-\infty}^{\infty} e^{-i\omega\tau}R(\tau)\,d\tau,
\qquad
R(\tau)=\frac{1}{2\pi}\int_{-\infty}^{\infty} e^{i\omega\tau}S(\omega)\,d\omega.
\end{equation}

Under this convention, no additional \(2\pi\) factor appears in the forward transform for \(S(\omega)\); the \(2\pi\) factor appears only in the inverse transform.

\subsection{Transfer-function derivation of the matrix PSD}
\label{app:LNA_PSD_derivation}

We derive the PSD formula via the standard transfer-function representation of the stationary OU process.
Formally differentiating~\eqref{eq:appB_OU} in the sense of generalized stochastic processes gives
\begin{equation}\label{eq:appB_OU_formal}
\dot y(t)=J\,y(t)+B\,\xi(t),
\end{equation}
where \(\xi(t):=\dot W(t)\) is \(r\)-dimensional Gaussian white noise with covariance
\begin{equation}\label{eq:appB_white_noise_cov}
\mathbb E\!\big[\xi(t)\xi(s)^\top\big]=\delta(t-s)I_r.
\end{equation}

Taking Fourier transforms of~\eqref{eq:appB_OU_formal} yields
\[
(i\omega I-J)\widehat y(\omega)=B\,\widehat\xi(\omega),
\]
hence
\begin{equation}\label{eq:appB_transfer_function}
\widehat y(\omega)=H(\omega)\widehat\xi(\omega),
\qquad
H(\omega):=(i\omega I-J)^{-1}B.
\end{equation}

Since the white-noise input has (matrix) spectral density \(I_r\), the output PSD is
\begin{equation}\label{eq:appB_PSD_transfer}
S(\omega)=H(\omega)\,I_r\,H(\omega)^\ast
=
\big[(i\omega I-J)^{-1}B\big]\big[(i\omega I-J)^{-1}B\big]^\ast,
\end{equation}
where \(^\ast\) denotes conjugate transpose.

Using \(D_*=BB^\top\) and the identities
\[
(i\omega I-J)^{-1}=-(J-i\omega I)^{-1},
\qquad
\big[(i\omega I-J)^{-1}\big]^\ast
=
(-i\omega I-J^\top)^{-1}
=
-(J^\top+i\omega I)^{-1},
\]
the minus signs cancel, yielding the matrix PSD formula used in the main text:
\begin{equation}\label{eq:appB_PSD_final}
S(\omega)=
\bigl(J-i\omega I\bigr)^{-1}D_*
\bigl(J^\top+i\omega I\bigr)^{-1},
\qquad \omega\in\mathbb R.
\end{equation}

Because \(J\) and \(D_*\) are real matrices and \(D_*=D_*^\top\), the matrix PSD satisfies the Hermitian symmetry
\begin{equation}\label{eq:appB_Hermitian_symmetry}
S(-\omega)=S(\omega)^\ast.
\end{equation}
Equivalently, the real parts of diagonal entries are even functions of \(\omega\), and the imaginary parts of off-diagonal entries are odd (with the corresponding conjugacy relations).

\section{Stationary covariance from the \texorpdfstring{\(2\times 2\)}{2x2} Lyapunov equation and SSF ellipse formulas}
\label{app:SSF_Lyapunov_2x2}

This appendix provides an explicit scheme for the
stationary covariance in the \(2\times2\) LNA/OU system, together with the stochastic
sensitivity function (SSF) ellipse formulas and the noisy-precursor indicator
\(\Pi_p\) used in the main text.
The notation is fully aligned with Section~\ref{sec:quasi_cycles}: \(J\) denotes the
Jacobian at the coexistence equilibrium, \(D_*\) the local diffusion covariance, and
\(W\) the stationary covariance of the LNA fluctuations.

\subsection{LNA/OU setup and Lyapunov equation}
\label{app:SSF_setup}

In the deterministically stable coexistence regime (\(J\) Hurwitz), the LNA fluctuations
\(y_t=x_t-K_3\) satisfy the OU SDE
\begin{equation}\label{eq:appC_OU}
dy_t = J\,y_t\,dt + B\,dW_t,
\qquad BB^\top = D_*,
\end{equation}
where \(J,D_*\in\mathbb R^{2\times2}\), \(D_*=D_*^\top\), and \(W_t\) is a standard Brownian motion of suitable dimension.

The stationary covariance matrix
\[
W \coloneqq \mathbb{E}[y_t y_t^\top]
=
\begin{pmatrix}
w_{11} & w_{12}\\
w_{12} & w_{22}
\end{pmatrix}
\]
is the unique symmetric solution of the continuous Lyapunov equation
\begin{equation}\label{eq:appC_Lyapunov}
J W + W J^\top + D_* = 0.
\end{equation}

Write
\begin{equation}\label{eq:appC_entries}
J=\begin{pmatrix} a & b \\ c & d \end{pmatrix},
\qquad
D_*=\begin{pmatrix} q_{11} & q_{12} \\ q_{12} & q_{22} \end{pmatrix}.
\end{equation}

Expanding~\eqref{eq:appC_Lyapunov} entrywise and using symmetry of \(W\) gives
\begin{subequations}\label{eq:appC_componentwise}
\begin{align}
2a\,w_{11} + 2b\,w_{12} &= -q_{11}, \label{eq:appC_11}\\
c\,w_{11} + (a+d)\,w_{12} + b\,w_{22} &= -q_{12}, \label{eq:appC_12}\\
2c\,w_{12} + 2d\,w_{22} &= -q_{22}. \label{eq:appC_22}
\end{align}
\end{subequations}

Equivalently,
\begin{equation}\label{eq:appC_linear_system}
\underbrace{\begin{pmatrix}
2a & 2b & 0\\
c & a+d & b\\
0 & 2c & 2d
\end{pmatrix}}_{=: \mathcal M}
\begin{pmatrix}
w_{11}\\ w_{12}\\ w_{22}
\end{pmatrix}
=
-
\begin{pmatrix}
q_{11}\\ q_{12}\\ q_{22}
\end{pmatrix}.
\end{equation}
Thus
\begin{equation}\label{eq:appC_w_solution}
\begin{pmatrix}
w_{11}\\ w_{12}\\ w_{22}
\end{pmatrix}
=
-\mathcal M^{-1}
\begin{pmatrix}
q_{11}\\ q_{12}\\ q_{22}
\end{pmatrix},
\end{equation}
whenever \(\mathcal M\) is invertible.

A direct computation yields
\begin{equation}\label{eq:appC_detM}
\det(\mathcal M)=4(a+d)(ad-bc)=4\,\operatorname{tr}(J)\,\det(J).
\end{equation}
Hence, if \(J\) is Hurwitz (in the \(2\times2\) case equivalently \(\operatorname{tr}(J)<0\) and \(\det(J)>0\)),
then \(\det(\mathcal M)\neq 0\), and the Lyapunov equation~\eqref{eq:appC_Lyapunov}
has a unique symmetric solution \(W\).
This is the regime in which the stationary LNA covariance, PSD, and SSF diagnostics are defined in the main text (Section~\ref{sec:quasi_cycles}).

\subsection{SSF confidence ellipse and principal-axis geometry}
\label{app:SSF_ellipse}

For a confidence level \(p\in(0,1)\), the SSF confidence ellipse in deviation coordinates is
\begin{equation}\label{eq:appC_conf_ellipse}
\mathcal E_p
=
\left\{
z\in\mathbb R^2:\;
z^\top W^{-1} z \le \chi^2_2(p)
\right\},
\end{equation}
where \(\chi^2_2(p)\) is the \(p\)-quantile of the chi-square distribution with \(2\) degrees of freedom.

Let \(\lambda_+(W)\ge \lambda_-(W)>0\) be the eigenvalues of \(W\). For a \(2\times2\) symmetric matrix,
\begin{equation}\label{eq:appC_eigs}
\lambda_{\pm}(W)
=
\frac{w_{11}+w_{22}}{2}
\pm
\frac{1}{2}\sqrt{(w_{11}-w_{22})^2+4w_{12}^2}.
\end{equation}
The corresponding semi-axis lengths are
\begin{equation}\label{eq:appC_axes}
\ell_\pm=\sqrt{\chi^2_2(p)\,\lambda_\pm(W)}.
\end{equation}

A robust expression for the principal-axis angle \(\theta\) (measured from the \(N\)-axis to the major axis of the ellipse) is
\begin{equation}\label{eq:appC_theta_atan2}
\theta=\frac{1}{2}\operatorname{atan2}\!\bigl(2w_{12},\,w_{11}-w_{22}\bigr),
\end{equation}
where \(\operatorname{atan2}(y,x)\in(-\pi,\pi]\) is the two-argument arctangent.
This is preferable to using \(\tan(2\theta)=2w_{12}/(w_{11}-w_{22})\) directly, because \(\operatorname{atan2}\) handles the quadrant correctly and is more robust when \(w_{11}\approx w_{22}\).

Alternatively, \(\theta\) may be obtained from the normalized eigenvector associated with \(\lambda_+(W)\). The two definitions agree up to the sign/orientation convention \(\theta\sim \theta+\pi\).

\subsection{Noisy-precursor indicator \texorpdfstring{\(\Pi_p\)}{Pi\_p}}
\label{app:SSF_precursor}

The noisy-precursor indicator used in the main text is
\begin{equation}\label{eq:appC_precursor_ratio}
\Pi_p=\frac{\ell_+}{d_{\mathrm{sep}}},
\end{equation}
where \(\ell_+\) is the major semi-axis length from~\eqref{eq:appC_axes} and \(d_{\mathrm{sep}}\) is a problem-dependent distance from the equilibrium to an extinction-relevant or basin-separating boundary (in deviation coordinates).

\subsection{Implementation recipe (Lyapunov to SSF to noisy-precursor indicator)}
\label{app:SSF_algorithm}

For reproducibility, we summarize the steps used in the paper in Algorithm~\ref{alg}.

\begin{algorithm}[h]\label{alg}
\caption{Computation of \(W\), SSF ellipse, and noisy-precursor indicator in the \(2\times2\) LNA}
\label{alg:appC_ssf}
\begin{algorithmic}[1]
\Require Coexistence equilibrium \(K_3\), Jacobian \(J=J(K_3)\), local covariance \(D_*=a(K_3)\), confidence level \(p\)
\Ensure Stationary covariance \(W\), ellipse semi-axes \(\ell_\pm\), angle \(\theta\), indicator \(\Pi_p\) (if \(d_{\mathrm{sep}}\) specified)
\State Check Hurwitz condition for \(J\) (equivalently in \(2\times2\): \(\operatorname{tr}(J)<0\) and \(\det(J)>0\))
\If{Hurwitz fails}
    \State Return ``stationary LNA diagnostics not defined''
\EndIf
\State Form \(\mathcal M\) in~\eqref{eq:appC_linear_system} and solve for \((w_{11},w_{12},w_{22})^\top\)
\State Construct \(W=\begin{psmallmatrix} w_{11}&w_{12}\\ w_{12}&w_{22}\end{psmallmatrix}\) and (optionally) symmetrize numerically
\State Compute eigenvalues \(\lambda_\pm(W)\) from~\eqref{eq:appC_eigs}
\State Compute semi-axis lengths \(\ell_\pm\) from~\eqref{eq:appC_axes}
\State Compute major-axis angle \(\theta=\tfrac12\operatorname{atan2}(2w_{12},\,w_{11}-w_{22})\)
\If{\(d_{\mathrm{sep}}\) is specified}
    \State Compute \(\Pi_p=\ell_+/d_{\mathrm{sep}}\)
\EndIf
\end{algorithmic}
\end{algorithm}

In implementation, it is useful to verify that \(W\) is positive definite (within numerical tolerance) and that \(\lambda_\pm(W)>0\) in the Hurwitz regime. Small symmetry violations due to floating-point roundoff can be removed by replacing \(W\) with \((W+W^\top)/2\) before spectral decomposition.

\section{Implementation-level summary of covariance closures: full-covariance model versus diagonal comparator}
\label{app:covariance_closures}

This appendix summarizes the diffusion coefficients, with emphasis on the precise definition of the full-covariance model and the diagonal-noise comparator.
The goal is modeling transparency: both models use the same deterministic drift and differ only in the diffusion covariance structure associated with predation--conversion events.

\subsection{Common drift and notation}
\label{app:cov_common_drift}

Let \(x=(N,P)^\top\in (0,\infty)^2\) denote prey and predator densities, and let \(\Omega>0\) be the system-size parameter.
Throughout the paper, both stochastic models (full covariance and diagonal comparator) use the same reduced R--M drift:
\begin{equation}\label{eq:appD_common_drift}
b(x)=
\begin{pmatrix}
N\!\left(1-\dfrac{N}{k}\right)-\dfrac{mNP}{1+N}\\[8pt]
P\!\left(-c+\dfrac{mN}{1+N}\right)
\end{pmatrix}.
\end{equation}
Define the predation intensity
\begin{equation}\label{eq:appD_fpred}
f_{\mathrm{pred}}(x):=\frac{mNP}{1+N}.
\end{equation}

For compact comparison, we decompose the diffusion covariance into a shared non-predation part and a predation-induced part:
\begin{equation}\label{eq:appD_cov_decomp}
a(x)=a_0(x)+a_{\mathrm{pred}}(x).
\end{equation}
When we fix (\(e=1\)), the shared non-predation contribution is
\begin{equation}\label{eq:appD_a0_e1}
a_0(x)
=
\frac{1}{\Omega}
\begin{pmatrix}
N+\dfrac{N^2}{k} & 0\\[8pt]
0 & cP
\end{pmatrix},
\end{equation}
corresponding to prey birth, prey competition death, and predator death channels.

\subsection{General-\texorpdfstring{\(e\)}{e} formulas}
\label{app:cov_general_e}

We first record the covariance formulas with explicit conversion efficiency \(e\in(0,1]\). These expressions clarify the relationship between the coupled and split-channel closures.

\medskip
\noindent\textbf{(i) Bernoulli-coupled exact-CTMC-induced diffusion covariance contribution.}

For the Bernoulli-coupled predation--conversion mechanism (successful conversion with probability \(e\)),
the predation contribution to the density diffusion covariance is
\begin{equation}\label{eq:appD_apred_B}
a_{\mathrm{pred}}^{(B)}(x)
=
\frac{1}{\Omega}\,f_{\mathrm{pred}}(x)
\begin{pmatrix}
1 & -e\\
-e & e
\end{pmatrix}.
\end{equation}

\medskip
\noindent\textbf{(ii) Effective coupled \texorpdfstring{\((-1,e)^\top\)}{(-1,e)^\top} closure (compact diffusion closure).}

For the effective coupled diffusion-level closure with stoichiometric increment \(\nu_{\mathrm{eff}}=(-1,e)^\top\),
the predation covariance contribution is
\begin{equation}\label{eq:appD_apred_eff}
a_{\mathrm{pred}}^{(\mathrm{eff})}(x)
=
\frac{1}{\Omega}\,f_{\mathrm{pred}}(x)
\begin{pmatrix}
1 & -e\\
-e & e^2
\end{pmatrix}.
\end{equation}

\medskip
\noindent\textbf{(iii) Drift-matched split-channel (diagonal-noise comparator).}

For the split-channel comparator (independent prey-removal and predator-birth channels, drift-matched to the same deterministic model),
the predation covariance contribution is
\begin{equation}\label{eq:appD_apred_split}
a_{\mathrm{pred}}^{(S)}(x)
=
\frac{1}{\Omega}\,f_{\mathrm{pred}}(x)
\begin{pmatrix}
1 & 0\\
0 & e
\end{pmatrix}.
\end{equation}

The coupled closures \eqref{eq:appD_apred_B}--\eqref{eq:appD_apred_eff} have a strictly negative prey--predator cross-covariance for \(N,P>0\),
\[
\bigl(a_{\mathrm{pred}}\bigr)_{12}(x)<0,
\]
whereas the split-channel comparator \eqref{eq:appD_apred_split} has
\[
\bigl(a_{\mathrm{pred}}^{(S)}\bigr)_{12}(x)=0.
\]
This is the key covariance-structure difference carried into the Lyapunov/PSD/SSF diagnostics.

\subsection{Closures used in this paper and ``fair comparison'' definition}
\label{app:cov_fair_comparison}

In the main text, we compare:

\begin{enumerate}[label=(\roman*)]
\item a full-covariance diffusion model (mechanistically coupled predation--conversion closure), and
\item a diagonal-noise comparator (drift-matched split-channel closure).
\end{enumerate}

The comparison is fair in the following sense:
\begin{enumerate}[label=(\roman*)]
\item the deterministic drift \(b(x)\) is identical in both models \eqref{eq:appD_common_drift};
\item the same parameter values \((m,c,k,\Omega)\), initial conditions, time discretization, burn-in/sampling windows, and Monte Carlo replication counts are used;
\item the only modeling difference is the covariance structure of the diffusion term (specifically, the predation-induced off-diagonal covariance and associated covariance geometry).
\end{enumerate}

\subsection{\texorpdfstring{\(e=1\)}{e=1} specialization in the manuscript:}
\label{app:cov_e1_specialization}

We fix \(e=1\).
In this case, the Bernoulli-coupled and effective coupled closures coincide at the predation-covariance level:
\begin{equation}\label{eq:appD_apred_full_e1}
a_{\mathrm{pred}}^{\mathrm{full}}(x)
=
\frac{1}{\Omega}\,f_{\mathrm{pred}}(x)
\begin{pmatrix}
1 & -1\\
-1 & 1
\end{pmatrix},
\qquad
f_{\mathrm{pred}}(x)=\frac{mNP}{1+N}.
\end{equation}
Hence the full-covariance diffusion matrix is
\begin{equation}\label{eq:appD_a_full_e1}
a^{\mathrm{full}}(x)
=
a_0(x)+a_{\mathrm{pred}}^{\mathrm{full}}(x)
=
\frac{1}{\Omega}
\begin{pmatrix}
N+\dfrac{N^2}{k}+\dfrac{mNP}{1+N} & -\dfrac{mNP}{1+N}\\[10pt]
-\dfrac{mNP}{1+N} & cP+\dfrac{mNP}{1+N}
\end{pmatrix}.
\end{equation}

The drift-matched diagonal-noise comparator is
\begin{equation}\label{eq:appD_a_diag_e1}
a^{\mathrm{diag}}(x)
=
a_0(x)+a_{\mathrm{pred}}^{(S)}(x)\big|_{e=1}
=
\frac{1}{\Omega}
\begin{pmatrix}
N+\dfrac{N^2}{k}+\dfrac{mNP}{1+N} & 0\\[10pt]
0 & cP+\dfrac{mNP}{1+N}
\end{pmatrix}.
\end{equation}

Comparing \eqref{eq:appD_a_full_e1} and \eqref{eq:appD_a_diag_e1}, the diagonal entries are identical, and the sole difference is the predation-induced cross-covariance:
\begin{equation}\label{eq:appD_only_difference_e1}
a^{\mathrm{full}}_{12}(x)=a^{\mathrm{full}}_{21}(x)
=
-\frac{1}{\Omega}\frac{mNP}{1+N},
\qquad
a^{\mathrm{diag}}_{12}(x)=a^{\mathrm{diag}}_{21}(x)=0.
\end{equation}
Thus, differences in stationary covariance, PSD, SSF ellipse orientation, and noisy-precursor indicators can be attributed directly to the presence or absence of mechanistically inherited prey--predator cross-covariance, rather than to drift mismatch or unequal marginal noise intensities.

\section{Explicit Jacobian at the coexistence equilibrium \texorpdfstring{$K_3$}{K3}}
\label{app:Jacobian_K3}

Because the LNA, Lyapunov covariance, and PSD diagnostics all depend on
\[
J = J(K_3),
\]
we record here the explicit Jacobian entries for the nondimensional R--M system used throughout the paper and verify the Hopf-threshold sign change of \(\operatorname{tr}(J(K_3))\).

\subsection{Deterministic drift and general Jacobian}
\label{app:Jacobian_general}

The deterministic R--M system is
\begin{equation}\label{eq:appJ_RM}
\begin{cases}
\dfrac{dN}{dt} = f(N,P) = N\!\left(1-\dfrac{N}{k}\right)-\dfrac{mNP}{1+N}, \\[8pt]
\dfrac{dP}{dt} = g(N,P) = P\!\left(-c+\dfrac{mN}{1+N}\right).
\end{cases}
\end{equation}
Its Jacobian matrix is
\begin{equation}\label{eq:appJ_general_J}
J(N,P)=
\begin{pmatrix}
f_N(N,P) & f_P(N,P)\\[4pt]
g_N(N,P) & g_P(N,P)
\end{pmatrix}
=
\begin{pmatrix}
1-\dfrac{2N}{k}-\dfrac{mP}{(1+N)^2} & -\dfrac{mN}{1+N}\\[10pt]
\dfrac{mP}{(1+N)^2} & -c+\dfrac{mN}{1+N}
\end{pmatrix}.
\end{equation}

\subsection{Coexistence equilibrium and useful identities}
\label{app:Jacobian_K3_identities}

When the coexistence equilibrium exists, it is
\begin{equation}\label{eq:appJ_K3}
K_3=(N^*,P^*)=
\left(
\frac{c}{m-c},
\frac{k(m-c)-c}{k(m-c)^2}
\right),
\end{equation}
under the feasibility conditions \(m>c\) and \(k(m-c)>c\).

At \(K_3\), the following identities are repeatedly useful:
\begin{equation}\label{eq:appJ_identities}
1+N^* = 1+\frac{c}{m-c} = \frac{m}{m-c},
\qquad
\frac{N^*}{1+N^*}=\frac{c}{m},
\qquad
\frac{1}{(1+N^*)^2}=\frac{(m-c)^2}{m^2}.
\end{equation}
Moreover, the predator isocline condition implies
\begin{equation}\label{eq:appJ_gP_zero}
g_P(K_3)= -c+\frac{mN^*}{1+N^*}=0.
\end{equation}

\subsection{Explicit entries of \texorpdfstring{$J(K_3)$}{J(K3)}}
\label{app:Jacobian_K3_entries}

Write
\begin{equation}\label{eq:appJ_abcd_def}
J(K_3)=
\begin{pmatrix}
a & b\\
c_J & d
\end{pmatrix},
\end{equation}
where we denote the \((2,1)\)-entry by \(c_J\) (to avoid confusion with the model parameter \(c\)).

Substituting \((N^*,P^*)\) into~\eqref{eq:appJ_general_J} gives:
\begin{align}
b &= -\frac{mN^*}{1+N^*} = -c, \label{eq:appJ_b}\\[4pt]
d &= -c+\frac{mN^*}{1+N^*}=0, \label{eq:appJ_d}
\end{align}
and
\begin{align}
c_J
&= \frac{mP^*}{(1+N^*)^2}
= \frac{m}{(1+N^*)^2}\cdot \frac{k(m-c)-c}{k(m-c)^2}
= \frac{k(m-c)-c}{km}. \label{eq:appJ_cJ}
\end{align}

For the \((1,1)\)-entry, one may simplify directly or use \(mP^*/(1+N^*)^2=c_J\):
\begin{align}
a
&=1-\frac{2N^*}{k}-\frac{mP^*}{(1+N^*)^2}\notag\\
&=1-\frac{2c}{k(m-c)}-\frac{k(m-c)-c}{km}\notag\\
&=\frac{c\,[\,k(m-c)-(m+c)\,]}{km(m-c)}. \label{eq:appJ_a}
\end{align}

Hence
\begin{equation}\label{eq:appJ_JK3_explicit}
J(K_3)=
\begin{pmatrix}
\dfrac{c\,[\,k(m-c)-(m+c)\,]}{km(m-c)} & -c\\[12pt]
\dfrac{k(m-c)-c}{km} & 0
\end{pmatrix}.
\end{equation}

\begin{remark}[Equivalent notation used in the \(2\times2\) Lyapunov appendix.]\label{rmk:equivalent_notation}
If one writes \(J=\begin{psmallmatrix} a & b\\ c & d\end{psmallmatrix}\) in Appendix~\ref{app:SSF_Lyapunov_2x2},
then the identification is
\begin{equation}\label{eq:appJ_abcd_identification}
a=\frac{c\,[\,k(m-c)-(m+c)\,]}{km(m-c)},
\qquad
b=-c,
\qquad
c=\frac{k(m-c)-c}{km},
\qquad
d=0.
\end{equation}
\end{remark}

\subsection{Trace, determinant, and Hopf-threshold sign change}
\label{app:Jacobian_trace_det}

From~\eqref{eq:appJ_JK3_explicit},
\begin{equation}\label{eq:appJ_trace}
\operatorname{tr}\bigl(J(K_3)\bigr)=a
=
\frac{c\,[\,k(m-c)-(m+c)\,]}{km(m-c)}.
\end{equation}
Therefore, defining
\begin{equation}\label{eq:appJ_kH}
k_H=\frac{m+c}{m-c},
\end{equation}
we obtain the factorized form
\begin{equation}\label{eq:appJ_trace_factorized}
\operatorname{tr}\bigl(J(K_3)\bigr)
=
\frac{c}{km(m-c)}\,(m-c)\,(k-k_H)
=
\frac{c}{km}\,(k-k_H).
\end{equation}
Since \(c>0\), \(k>0\), and \(m>0\), this makes the sign change explicit:
\begin{equation}\label{eq:appJ_trace_sign}
\operatorname{tr}\bigl(J(K_3)\bigr)<0 \iff k<k_H,
\qquad
\operatorname{tr}\bigl(J(K_3)\bigr)=0 \iff k=k_H,
\qquad
\operatorname{tr}\bigl(J(K_3)\bigr)>0 \iff k>k_H.
\end{equation}

The determinant is
\begin{equation}\label{eq:appJ_det}
\det\bigl(J(K_3)\bigr)=ad-bc_J = -(-c)c_J = c\,c_J
= \frac{c\,[\,k(m-c)-c\,]}{km},
\end{equation}
which is strictly positive under the coexistence feasibility condition \(k(m-c)>c\).
Thus, in the coexistence regime, local stability is governed by the sign of the trace, and the enrichment-driven Hopf threshold occurs at \(k=k_H\), as used throughout the main text.

\section{Technical proofs for the covariance-structure results in Section~\ref{sec:stochastic_model}}
\label{app:covariance_structure_proofs}

This appendix provides detailed calculations and proofs for the structural covariance results stated in Section~\ref{sec:stochastic_model}, with particular emphasis on the mechanistic origin of the negative prey--predator cross-covariance under coupled predation--conversion closures. The purpose is to make the key claims in the main text fully verifiable and citable.

Throughout, we write
\[
x=(N,P)\in D=(0,\infty)^2,
\qquad
f_{\mathrm{pred}}(x):=\frac{mNP}{1+N},
\]
and recall the density-level covariance identity
\begin{equation}
\label{eq:appBcov_unified_cov}
a(x)=\frac{1}{\Omega}\sum_{k=1}^K f_k(x)\,\nu_k\nu_k^\top
=\frac{1}{\Omega}\,S\,\mathrm{diag}(f(x))\,S^\top.
\end{equation}
In this appendix, \(a_{\mathrm{pred}}(\cdot)\) denotes the contribution to \(a(\cdot)\) coming only from predation-related channels.

\subsection{Predation covariance under the Bernoulli-coupled closure}
\label{app:bern_covariance_derivation}

Recall the Bernoulli-coupled exact CTMC predation channels used in Section~\ref{sec:stochastic_model}:
\[
\nu_4^{(B)}=\binom{-1}{1},
\qquad
\nu_5^{(B)}=\binom{-1}{0},
\]
with density-level intensities
\[
f_4^{(B)}(x)=e\,f_{\mathrm{pred}}(x),
\qquad
f_5^{(B)}(x)=(1-e)\,f_{\mathrm{pred}}(x),
\qquad e\in(0,1].
\]
Therefore, the predation contribution to the diffusion covariance is
\begin{equation}
\label{eq:appBcov_apred_B_start}
a_{\mathrm{pred}}^{(B)}(x)
=
\frac{1}{\Omega}
\left[
f_4^{(B)}(x)\,\nu_4^{(B)}(\nu_4^{(B)})^\top
+
f_5^{(B)}(x)\,\nu_5^{(B)}(\nu_5^{(B)})^\top
\right].
\end{equation}
We compute the rank-one matrices explicitly:
\begin{align}
\nu_4^{(B)}(\nu_4^{(B)})^\top
&=
\binom{-1}{1}\begin{pmatrix}-1 & 1\end{pmatrix}
=
\begin{pmatrix}
1 & -1\\
-1 & 1
\end{pmatrix}, \label{eq:appBcov_nu4_outer}\\[6pt]
\nu_5^{(B)}(\nu_5^{(B)})^\top
&=
\binom{-1}{0}\begin{pmatrix}-1 & 0\end{pmatrix}
=
\begin{pmatrix}
1 & 0\\
0 & 0
\end{pmatrix}. \label{eq:appBcov_nu5_outer}
\end{align}
Substituting \eqref{eq:appBcov_nu4_outer}--\eqref{eq:appBcov_nu5_outer} into \eqref{eq:appBcov_apred_B_start} gives
\begin{align}
a_{\mathrm{pred}}^{(B)}(x)
&=
\frac{1}{\Omega}\left[
e f_{\mathrm{pred}}(x)
\begin{pmatrix}
1 & -1\\
-1 & 1
\end{pmatrix}
+
(1-e) f_{\mathrm{pred}}(x)
\begin{pmatrix}
1 & 0\\
0 & 0
\end{pmatrix}
\right] \notag\\[4pt]
&=
\frac{f_{\mathrm{pred}}(x)}{\Omega}
\left[
\begin{pmatrix}
e & -e\\
-e & e
\end{pmatrix}
+
\begin{pmatrix}
1-e & 0\\
0 & 0
\end{pmatrix}
\right] \notag\\[4pt]
&=
\frac{f_{\mathrm{pred}}(x)}{\Omega}
\begin{pmatrix}
1 & -e\\
-e & e
\end{pmatrix}. \label{eq:appBcov_apred_B_final}
\end{align}
Equivalently,
\begin{equation}
\label{eq:appBcov_apred_B_final_explicit}
a_{\mathrm{pred}}^{(B)}(x)
=
\frac{1}{\Omega}\,\frac{mNP}{1+N}
\begin{pmatrix}
1 & -e\\
-e & e
\end{pmatrix}.
\end{equation}
Hence, the off-diagonal entry is
\begin{equation}
\label{eq:appBcov_B_offdiag}
\bigl(a_{\mathrm{pred}}^{(B)}(x)\bigr)_{12}
=
-\frac{1}{\Omega}\,e\,\frac{mNP}{1+N}.
\end{equation}
In particular, for \(x\in D\) and \(e\in(0,1]\),
\begin{equation}
\label{eq:appBcov_B_offdiag_negative}
\bigl(a_{\mathrm{pred}}^{(B)}(x)\bigr)_{12}<0.
\end{equation}

\subsection{Predation covariance under the effective coupled \texorpdfstring{\((-1,e)^\top\)}{(-1,e) transpose} closure}
\label{app:effective_covariance_derivation}

For the effective diffusion-level coupled closure, the predation channel is represented by
\[
\nu_{\mathrm{eff}}=\binom{-1}{e},
\qquad
f_{\mathrm{eff}}(x)=f_{\mathrm{pred}}(x)=\frac{mNP}{1+N}.
\]
Its predation covariance contribution is
\begin{equation}
\label{eq:appBcov_apred_eff_start}
a_{\mathrm{pred}}^{(\mathrm{eff})}(x)
=
\frac{1}{\Omega}\,f_{\mathrm{pred}}(x)\,\nu_{\mathrm{eff}}\nu_{\mathrm{eff}}^\top.
\end{equation}
Computing the outer product,
\begin{equation}
\label{eq:appBcov_nueff_outer}
\nu_{\mathrm{eff}}\nu_{\mathrm{eff}}^\top
=
\binom{-1}{e}\begin{pmatrix}-1 & e\end{pmatrix}
=
\begin{pmatrix}
1 & -e\\
-e & e^2
\end{pmatrix}.
\end{equation}
Substituting \eqref{eq:appBcov_nueff_outer} into \eqref{eq:appBcov_apred_eff_start}, we obtain
\begin{equation}
\label{eq:appBcov_apred_eff_final}
a_{\mathrm{pred}}^{(\mathrm{eff})}(x)
=
\frac{1}{\Omega}\,\frac{mNP}{1+N}
\begin{pmatrix}
1 & -e\\
-e & e^2
\end{pmatrix}.
\end{equation}
Therefore, the off-diagonal entry is
\begin{equation}
\label{eq:appBcov_eff_offdiag}
\bigl(a_{\mathrm{pred}}^{(\mathrm{eff})}(x)\bigr)_{12}
=
-\frac{1}{\Omega}\,e\,\frac{mNP}{1+N},
\end{equation}
which is strictly negative on \(D\) for \(e\in(0,1]\).

\subsection{Predation covariance under the split-channel comparator}
\label{app:split_covariance_derivation}

For the split-channel comparator, predation-related prey removal and predator birth are modeled as independent channels:
\[
\nu_4^{(S)}=\binom{-1}{0},
\qquad
\nu_5^{(S)}=\binom{0}{1},
\]
with intensities
\[
f_4^{(S)}(x)=f_{\mathrm{pred}}(x),
\qquad
f_5^{(S)}(x)=e\,f_{\mathrm{pred}}(x).
\]
The predation covariance contribution is
\begin{equation}
\label{eq:appBcov_apred_split_start}
a_{\mathrm{pred}}^{(S)}(x)
=
\frac{1}{\Omega}\left[
f_4^{(S)}(x)\,\nu_4^{(S)}(\nu_4^{(S)})^\top
+
f_5^{(S)}(x)\,\nu_5^{(S)}(\nu_5^{(S)})^\top
\right].
\end{equation}
Now
\begin{align}
\nu_4^{(S)}(\nu_4^{(S)})^\top
&=
\binom{-1}{0}\begin{pmatrix}-1 & 0\end{pmatrix}
=
\begin{pmatrix}
1 & 0\\
0 & 0
\end{pmatrix}, \label{eq:appBcov_nu4s_outer}\\[4pt]
\nu_5^{(S)}(\nu_5^{(S)})^\top
&=
\binom{0}{1}\begin{pmatrix}0 & 1\end{pmatrix}
=
\begin{pmatrix}
0 & 0\\
0 & 1
\end{pmatrix}. \label{eq:appBcov_nu5s_outer}
\end{align}
Hence
\begin{align}
a_{\mathrm{pred}}^{(S)}(x)
&=
\frac{1}{\Omega}\left[
f_{\mathrm{pred}}(x)
\begin{pmatrix}
1 & 0\\
0 & 0
\end{pmatrix}
+
e f_{\mathrm{pred}}(x)
\begin{pmatrix}
0 & 0\\
0 & 1
\end{pmatrix}
\right] \notag\\[4pt]
&=
\frac{1}{\Omega}\,\frac{mNP}{1+N}
\begin{pmatrix}
1 & 0\\
0 & e
\end{pmatrix}. \label{eq:appBcov_apred_split_final}
\end{align}
Therefore,
\begin{equation}
\label{eq:appBcov_split_offdiag}
\bigl(a_{\mathrm{pred}}^{(S)}(x)\bigr)_{12}=0.
\end{equation}
This confirms that the split-channel model is drift-compatible with the coupled closures (after parameter matching) but removes the same-event prey--predator covariance at the predation level.

\subsection{A general proposition: drift equivalence does not imply covariance equivalence}
\label{app:drift_vs_cov_general}

We now record an abstract statement (valid in an arbitrary dimension) that formalizes the distinction used repeatedly in Section~\ref{sec:stochastic_model}.

\begin{proposition}[Drift equivalence does not imply covariance equivalence]
\label{prop:drift_not_cov_general}
Fix \(d\in\mathbb N\), \(\Omega>0\), and let \(x\) be a state parameter. Consider two-channel representations
\[
\{(\nu_k,f_k(x))\}_{k=1}^{K}
\quad\text{and}\quad
\{(\tilde\nu_\ell,\tilde f_\ell(x))\}_{\ell=1}^{\tilde K},
\]
with \(\nu_k,\tilde\nu_\ell\in\mathbb R^d\) and nonnegative intensities \(f_k(x),\tilde f_\ell(x)\ge 0\). Define their diffusion drifts and covariances by
\begin{align}
b(x)&:=\sum_{k=1}^{K} f_k(x)\,\nu_k,
&
a(x)&:=\frac{1}{\Omega}\sum_{k=1}^{K} f_k(x)\,\nu_k\nu_k^\top, \label{eq:appBcov_ba_rep1}\\
\tilde b(x)&:=\sum_{\ell=1}^{\tilde K} \tilde f_\ell(x)\,\tilde\nu_\ell,
&
\tilde a(x)&:=\frac{1}{\Omega}\sum_{\ell=1}^{\tilde K} \tilde f_\ell(x)\,\tilde\nu_\ell\tilde\nu_\ell^\top. \label{eq:appBcov_ba_rep2}
\end{align}
Then \(b(x)=\tilde b(x)\) does not in general imply \(a(x)=\tilde a(x)\).

More precisely, equality of drifts imposes equality of first moments of the channel increments, while equality of covariances additionally requires equality of the corresponding second-moment tensors:
\[
\sum_{k=1}^{K} f_k(x)\,\nu_k=\sum_{\ell=1}^{\tilde K}\tilde f_\ell(x)\,\tilde\nu_\ell
\quad \centernot\implies \quad
\sum_{k=1}^{K} f_k(x)\,\nu_k\nu_k^\top=\sum_{\ell=1}^{\tilde K}\tilde f_\ell(x)\,\tilde\nu_\ell\tilde\nu_\ell^\top.
\]
\end{proposition}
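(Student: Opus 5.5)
The statement is a non-implication, so a single explicit counterexample suffices. I will take it from the predation sector already computed in this appendix. The argument has two parts: first a short structural remark explaining why drift matching cannot force covariance matching, then a concrete instance.

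For the structural remark, note that the map from channel data to the drift is linear in the increments $\nu_k$. By contrast, the map to the covariance is quadratic in the $\nu_k$, through the outer products $\nu_k\nu_k^\top$. Two families with the same weighted sum $\sum_k f_k\nu_k$ can therefore have different weighted sums of outer products. The cross terms $\nu_{k,i}\nu_{k,j}$ record whether components $i$ and $j$ change within the same event, and this information is invisible to the first moment.

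For the concrete instance, I take $d=2$ and set $e=1$ for definiteness. On the first side I use the coupled channel $\nu=(-1,1)^\top$ with intensity $f_{\mathrm{pred}}(x)$. On the other side I use the split channels $\tilde\nu_1=(-1,0)^\top$ and $\tilde\nu_2=(0,1)^\top$, each with intensity $f_{\mathrm{pred}}(x)$. Then I check two things.
\begin{itemize}
\item Both drifts equal $f_{\mathrm{pred}}(x)(-1,1)^\top$, which is immediate.
\item The covariances are those computed in \eqref{eq:appBcov_apred_B_final} and \eqref{eq:appBcov_apred_split_final}. Their $(1,2)$ entries are $-f_{\mathrm{pred}}(x)/\Omega$ and $0$ respectively.
\end{itemize}
At any $x$ with $N,P>0$ we have $f_{\mathrm{pred}}(x)>0$, so $a(x)\neq\tilde a(x)$ while $b(x)=\tilde b(x)$. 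The same conclusion holds for general $e\in(0,1]$ by the preceding subsections, since the off-diagonal entries are $-e f_{\mathrm{pred}}/\Omega$ and $0$. If any common non-predation channels are added to both families, they shift $b$ and $a$ identically and do not affect the conclusion.

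For arbitrary $d$, I embed this example in the first two coordinates by padding the increment vectors with zeros. This yields a counterexample in every dimension $d\ge2$. The case $d=1$ needs separate treatment, because there is no off-diagonal entry. There I would use the pair consisting of a single channel $\nu=2$ with intensity $1$ and two copies of $\tilde\nu=1$ with intensity $1$ each. Both have drift $2$, but the covariances are $4/\Omega$ and $2/\Omega$.

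There is no real obstacle in this argument. The only point requiring care is making precise what ``does not in general imply'' means: the claim is that there exist channel families with equal drifts but unequal covariances. The second sentence of the statement, about first versus second moments, then follows by identifying $\sum_k f_k\nu_k$ and $\sum_k f_k\nu_k\nu_k^\top$ as the first- and second-moment tensors of the jump measure $\sum_k f_k\delta_{\nu_k}$. The example shows that two such measures can agree in their first moment while differing in their second.
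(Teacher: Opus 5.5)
Your proposal is correct and follows the paper's proof essentially verbatim. You use the same first-versus-second-moment remark and the same $d=2$ counterexample: a coupled channel $(-1,1)^\top$ against split channels $(-1,0)^\top$ and $(0,1)^\top$ with equal intensities, where the paper takes an arbitrary constant $r>0$ in place of your $f_{\mathrm{pred}}(x)$ and gets off-diagonal entries $-r/\Omega$ versus $0$. Your zero-padding to $d\ge 2$ and the separate $d=1$ example go slightly beyond the paper, which only exhibits the $d=2$ case.
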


\begin{proof}
The claim is immediate from the definitions \eqref{eq:appBcov_ba_rep1}--\eqref{eq:appBcov_ba_rep2}: drift depends on weighted first moments of channel increments, whereas covariance depends on weighted second moments.

To show strict non-implication, it suffices to provide a counterexample. In dimension \(d=2\), fix a scalar \(r>0\) and compare:
\[
\text{Representation I: }\quad (\nu,f)=\left(\binom{-1}{1},\,r\right),
\]
with
\[
\text{Representation II: }\quad 
(\tilde\nu_1,\tilde f_1)=\left(\binom{-1}{0},\,r\right),
\qquad
(\tilde\nu_2,\tilde f_2)=\left(\binom{0}{1},\,r\right).
\]
Then both representations produce the same drift,
\[
b=\tilde b=r\binom{-1}{1},
\]
but their covariance matrices differ:
\[
a=\frac{r}{\Omega}
\begin{pmatrix}
1 & -1\\
-1 & 1
\end{pmatrix},
\qquad
\tilde a=\frac{r}{\Omega}
\begin{pmatrix}
1 & 0\\
0 & 1
\end{pmatrix}.
\]
In particular, \(a_{12}=-r/\Omega\neq 0=\tilde a_{12}\), so \(a\neq \tilde a\).
\end{proof}

\begin{remark}[Relevance to the predation closures in this paper]
\label{rmk:appBcov_drift_vs_cov_RM}
Proposition~\ref{prop:drift_not_cov_general} is precisely the mechanism behind the distinction between the coupled predation closures (Bernoulli-coupled exact CTMC and effective \((-1,e)^\top\) diffusion closure) and the split-channel comparator. These constructions are drift-compatible after parameter matching, but their predation-related covariance contributions differ because their channel increment second moments differ.
\end{remark}

\subsection{Structural sign proposition for the predation-induced cross-covariance}
\label{app:structural_sign_proposition}

We now state and prove the structural sign result underlying the main text discussion.

\begin{proposition}[Structural negativity of the predation-induced prey--predator cross-covariance]
\label{prop:structural_negative_crosscov}
Let \(x=(N,P)\in D=(0,\infty)^2\), \(\Omega>0\), \(m>0\), and \(e\in(0,1]\). For the predation-related covariance contribution:
\begin{enumerate}[label=(\roman*),leftmargin=1.8em]
\item under the Bernoulli-coupled closure,
\[
\bigl(a_{\mathrm{pred}}^{(B)}(x)\bigr)_{12}
=
-\frac{1}{\Omega}\,e\,\frac{mNP}{1+N}<0;
\]
\item under the effective coupled \((-1,e)\) closure,
\[
\bigl(a_{\mathrm{pred}}^{(\mathrm{eff})}(x)\bigr)_{12}
=
-\frac{1}{\Omega}\,e\,\frac{mNP}{1+N}<0;
\]
\item under the split-channel comparator,
\[
\bigl(a_{\mathrm{pred}}^{(S)}(x)\bigr)_{12}=0.
\]
\end{enumerate}
Consequently, the negative predation-induced prey--predator cross-covariance is a structural signature of same-event prey-loss/predator-gain coupling, and is absent in the split-channel representation that decouples those increments.
\end{proposition}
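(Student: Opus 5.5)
The plan is to read off each off-diagonal entry from the explicit matrices already computed in this appendix, and then check that the common prefactor is strictly positive on $D$. No new construction is needed: all three closures are instances of the unified identity \eqref{eq:appBcov_unified_cov}, restricted to the predation channels.

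For (i), I will take \eqref{eq:appBcov_apred_B_final}. It comes from summing the rank-one outer products \eqref{eq:appBcov_nu4_outer}--\eqref{eq:appBcov_nu5_outer}, weighted by $e f_{\mathrm{pred}}$ and $(1-e)f_{\mathrm{pred}}$. Only the channel $\nu_4^{(B)}=(-1,1)^\top$ contributes to the $(1,2)$ entry, and it contributes $-e f_{\mathrm{pred}}(x)/\Omega$; this is \eqref{eq:appBcov_B_offdiag}.

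For (ii), the outer product \eqref{eq:appBcov_nueff_outer} has $(1,2)$ entry $(-1)\cdot e=-e$. Hence \eqref{eq:appBcov_eff_offdiag} gives the same value, $-e f_{\mathrm{pred}}(x)/\Omega$.

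For (iii), the split increments $(-1,0)^\top$ and $(0,1)^\top$ each have a zero component. Each outer product in \eqref{eq:appBcov_nu4s_outer}--\eqref{eq:appBcov_nu5s_outer} is therefore diagonal, and \eqref{eq:appBcov_split_offdiag} follows identically in $x$.

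The sign claim reduces to showing $e f_{\mathrm{pred}}(x)/\Omega>0$. For $x=(N,P)\in D$ we have $N,P>0$ and $1+N>1>0$, so $f_{\mathrm{pred}}(x)=mNP/(1+N)>0$ because $m>0$. Together with $\Omega>0$ and $e\in(0,1]$ (so $e>0$), this gives strict negativity in (i) and (ii). The restriction $e>0$ is essential: at $e=0$ the coupled entry would vanish. This is where the hypothesis $e\in(0,1]$ is used.

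For the concluding sentence, I will cite Proposition~\ref{prop:drift_not_cov_general} and Remark~\ref{rmk:appBcov_drift_vs_cov_RM}. The coupled and split closures have matching first moments: the drift contributions are $f_{\mathrm{pred}}(-1,e)^\top$ in each case. The off-diagonal second moment, however, is nonzero exactly when a single increment vector has nonzero entries in both coordinates, that is, with opposite signs $-1$ and $+1$ or $+e$. The negative sign thus reflects same-event coupling, and it is absent when the increments are supported on separate coordinates.

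There is no real obstacle here. The only point that needs care is stating the positivity of the prefactor, including $e>0$, explicitly rather than implicitly.
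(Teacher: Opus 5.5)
Your proposal is correct and follows the paper's proof: it reads the $(1,2)$ entries off the explicit formulas for $a_{\mathrm{pred}}^{(B)}$, $a_{\mathrm{pred}}^{(\mathrm{eff})}$ and $a_{\mathrm{pred}}^{(S)}$, then notes that $e\,mNP/(\Omega(1+N))>0$ on $D$ when $m>0$, $e\in(0,1]$ and $\Omega>0$. Your appeal to the drift-versus-covariance proposition for the concluding sentence is a harmless addition, since the paper treats that sentence as interpretive; note, though, that your claim that the off-diagonal term is nonzero ``exactly when'' an increment has two nonzero entries holds here only because one channel is of that type, and in general such contributions could cancel.
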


\begin{proof}
Parts (i)--(iii) are immediate from the explicit formulas
\eqref{eq:appBcov_apred_B_final_explicit},
\eqref{eq:appBcov_apred_eff_final}, and
\eqref{eq:appBcov_apred_split_final}, respectively. Since \(x\in D\) implies \(N>0\) and \(P>0\), and since \(m>0\), \(e\in(0,1]\), and \(\Omega>0\), the quantity
\[
\frac{1}{\Omega}\,e\,\frac{mNP}{1+N}
\]
is strictly positive; therefore, the off-diagonal terms in (i) and (ii) are strictly negative.
\end{proof}

\begin{corollary}[Cross-covariance agreement of Bernoulli-coupled and effective coupled closures]
\label{cor:crosscov_B_equals_eff}
Under the assumptions of Proposition~\ref{prop:structural_negative_crosscov},
\[
\bigl(a_{\mathrm{pred}}^{(B)}(x)\bigr)_{12}
=
\bigl(a_{\mathrm{pred}}^{(\mathrm{eff})}(x)\bigr)_{12}
=
-\frac{1}{\Omega}\,e\,\frac{mNP}{1+N}.
\]
Thus, the effective \((-1,e)^\top\) closure preserves the predation-induced cross-covariance exactly, even though it is not an exact integer-valued CTMC jump representation when \(e\notin\mathbb N\).
\end{corollary}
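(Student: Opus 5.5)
The plan is to read the corollary off directly from Proposition~\ref{prop:structural_negative_crosscov}, so no new computation is needed. Parts (i) and (ii) of that proposition give the two off-diagonal entries in closed form. Under the Bernoulli-coupled closure the entry equals $-\frac{1}{\Omega}e\frac{mNP}{1+N}$, coming from \eqref{eq:appBcov_B_offdiag}. Under the effective $(-1,e)^\top$ closure it equals the same expression, coming from \eqref{eq:appBcov_eff_offdiag}. Since these coincide for every $x\in D$, $\Omega>0$, $m>0$ and $e\in(0,1]$, the displayed equality follows at once.

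To make the mechanism transparent, I would add a short structural remark tracing the $(1,2)$ entry back to the outer products.

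\emph{Bernoulli closure.} The entry is $\frac{1}{\Omega}\bigl[f_4^{(B)}(\nu_4^{(B)})_1(\nu_4^{(B)})_2+f_5^{(B)}(\nu_5^{(B)})_1(\nu_5^{(B)})_2\bigr]$. The second term vanishes because the no-conversion channel has zero predator increment. The first term equals $\frac{1}{\Omega}\,e f_{\mathrm{pred}}\cdot(-1)(1)$.

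\emph{Effective closure.} The entry is $\frac{1}{\Omega}f_{\mathrm{pred}}\cdot(-1)(e)$.

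In both cases the product of the prey and predator increments, weighted by its intensity, is $-e f_{\mathrm{pred}}$. The value $e$ enters linearly, either as a probability or as a fractional increment, so the two closures agree exactly at first order in the cross moment. This contrasts with the $(2,2)$ entries, $e$ versus $e^2$, which differ for $e<1$.

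For the final sentence of the corollary, I would observe that $\nu_{\mathrm{eff}}=(-1,e)^\top$ has a non-integer second component when $e\notin\mathbb N$. Hence it does not correspond to an integer-valued jump of a CTMC on $\mathbb Z_{\ge0}^2$, and so the closure is purely a diffusion-level object. This is definitional and needs no further argument.

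There is no real obstacle here. The only care needed is to avoid overclaiming: the corollary asserts agreement of the cross-covariance only. The full matrices $a^{(B)}_{\mathrm{pred}}$ and $a^{(\mathrm{eff})}_{\mathrm{pred}}$ coincide only when $e=1$, consistent with \eqref{eq:appD_apred_full_e1}.
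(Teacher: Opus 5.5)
Your proposal is correct and follows the paper's proof, which simply compares the off-diagonal entries of the explicit formulas \eqref{eq:appBcov_apred_B_final_explicit} and \eqref{eq:appBcov_apred_eff_final}. Your tracing of the $(1,2)$ entry back to the channel outer products, and your caveat that only the cross term agrees while the $(2,2)$ entries ($e$ versus $e^2$) differ for $e<1$, are both accurate additions.
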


\begin{proof}
Compare the off-diagonal entries in \eqref{eq:appBcov_apred_B_final_explicit} and \eqref{eq:appBcov_apred_eff_final}.
\end{proof}

\begin{remark}[Predator-variance difference between Bernoulli-coupled and effective coupled closures]
\label{rmk:appBcov_variance_difference}
Although Corollary~\ref{cor:crosscov_B_equals_eff} shows exact agreement of the predation-induced cross-covariance, the predator-variance entries differ:
\[
\bigl(a_{\mathrm{pred}}^{(B)}(x)\bigr)_{22}
=
\frac{1}{\Omega}\,e\,f_{\mathrm{pred}}(x),
\qquad
\bigl(a_{\mathrm{pred}}^{(\mathrm{eff})}(x)\bigr)_{22}
=
\frac{1}{\Omega}\,e^2\,f_{\mathrm{pred}}(x).
\]
This is the variance-level distinction noted in the main text: the effective \((-1,e)^\top\) closure preserves the coupled fluctuation direction and cross term, but not the exact Bernoulli offspring variance.
\end{remark}

\subsection{Matrix-level decomposition viewpoint}
\label{app:matrix_decomp_view}

For later reference, it is sometimes convenient to decompose the full covariance into non-predation and predation contributions:
\[
a(x)=a_{\mathrm{base}}(x)+a_{\mathrm{pred}}(x),
\]
where \(a_{\mathrm{base}}(x)\) collects prey birth, prey competition death, and predator death channels, and \(a_{\mathrm{pred}}(x)\) is one of
\[
a_{\mathrm{pred}}^{(B)}(x),\qquad
a_{\mathrm{pred}}^{(\mathrm{eff})}(x),\qquad
a_{\mathrm{pred}}^{(S)}(x).
\]
Since the base channels in the present construction contribute only diagonal terms, the sign and magnitude of the full off-diagonal covariance \(a_{12}(x)\) are determined entirely by the predation component. Therefore, Proposition~\ref{prop:structural_negative_crosscov} directly controls the sign of the full prey--predator cross-covariance in the coupled closures.

\end{appendices}

\bibliography{reference}

@article{kuehn_mathematical_2011,
	title = {A mathematical framework for critical transitions: {Bifurcations}, fast–slow systems and stochastic dynamics},
	volume = {240},
	doi = {10.1016/j.physd.2011.02.012},
	number = {12},
	journal = {Physica D: Nonlinear Phenomena},
	author = {Kuehn, C.},
	year = {2011},
	pages = {1020--1035},
}

@article{riebesell_paradox_1974,
	title = {Paradox of enrichment in competitive systems},
	volume = {55},
	copyright = {http://doi.wiley.com/10.1002/tdm\_license\_1.1},
	doi = {10.2307/1934634},
	language = {en},
	number = {1},
	journal = {Ecology},
	author = {Riebesell, J. F.},
	year = {1974},
	pages = {183--187},
}

@article{gilpin_enriched_1972,
	title = {Enriched predator–prey systems: {Theoretical} stability},
	volume = {177},
	doi = {10.1126/science.177.4052.902},
	number = {4052},
	journal = {Science},
	author = {Gilpin, M. E.},
	year = {1972},
	pages = {902--904},
}

@article{gillespie_general_1976,
	title = {A general method for numerically simulating the stochastic time evolution of coupled chemical reactions},
	volume = {22},
	doi = {10.1016/0021-9991(76)90041-3},
	number = {4},
	journal = {Journal of Computational Physics},
	author = {Gillespie, D. T.},
	year = {1976},
	pages = {403--434},
}

@article{gillespie_approximate_2001,
	title = {Approximate accelerated stochastic simulation of chemically reacting systems},
	volume = {115},
	doi = {10.1063/1.1378322},
	number = {4},
	journal = {The Journal of Chemical Physics},
	author = {Gillespie, D. T.},
	year = {2001},
	pages = {1716--1733},
}

@article{pineda-krch_tale_2007,
	title = {A tale of two cycles—distinguishing quasi-cycles and limit cycles in finite predator–prey populations},
	volume = {116},
	doi = {10.1111/j.2006.0030-1299.14940.x},
	number = {1},
	journal = {Oikos},
	author = {Pineda-Krch, M. and Blok, H. J. and Dieckmann, U. and Doebeli, M.},
	year = {2007},
	pages = {53--64},
}

@article{grima_effective_2010,
	title = {An effective rate equation approach to reaction kinetics in small volumes: {Theory} and application to biochemical reactions in nonequilibrium steady-state conditions},
	volume = {133},
	doi = {10.1063/1.3454685},
	number = {3},
	journal = {The Journal of Chemical Physics},
	author = {Grima, R.},
	year = {2010},
	pages = {035101},
}

@article{grima_construction_2011,
	title = {Construction and accuracy of partial differential equation approximations to the chemical master equation},
	volume = {84},
	doi = {10.1103/PhysRevE.84.056109},
	number = {5},
	journal = {Physical Review E},
	author = {Grima, R.},
	year = {2011},
	pages = {056109},
}

@article{schnoerr_approximation_2017,
	title = {Approximation and inference methods for stochastic biochemical kinetics—a tutorial review},
	volume = {50},
	doi = {10.1088/1751-8121/aa54d9},
	number = {9},
	journal = {Journal of Physics A: Mathematical and Theoretical},
	author = {Schnoerr, D. and Sanguinetti, G. and Grima, R.},
	year = {2017},
	pages = {093001},
}

@article{milshtein_first_1995,
	title = {A first approximation of the quasipotential in problems of the stability of systems with random non-degenerate perturbations},
	volume = {59},
	doi = {10.1016/0021-8928(95)00006-B},
	number = {1},
	journal = {Journal of Applied Mathematics and Mechanics},
	author = {Mil'shtein, G. N. and Ryashko, L. B.},
	year = {1995},
	pages = {47--56},
}

@article{bashkirtseva_stochastic_2004,
	title = {Stochastic sensitivity of {3D}-cycles},
	volume = {66},
	doi = {10.1016/j.matcom.2004.02.021},
	number = {1},
	journal = {Mathematics and Computers in Simulation},
	author = {Bashkirtseva, I. and Ryashko, L.},
	year = {2004},
	pages = {55--67},
}

@article{scheffer_anticipating_2012,
	title = {Anticipating critical transitions},
	volume = {338},
	doi = {10.1126/science.1225244},
	number = {6105},
	journal = {Science},
	author = {Scheffer, M. and Carpenter, S. R. and Lenton, T. M. and Bascompte, J. and Brock, W. and Dakos, V.},
	year = {2012},
	pages = {344--348},
}

@article{wissel_universal_1984,
	title = {A universal law of the characteristic return time near thresholds},
	volume = {65},
	doi = {10.1007/BF00384470},
	number = {1},
	journal = {Oecologia},
	author = {Wissel, C.},
	year = {1984},
	pages = {101--107},
}

@article{roughgarden_simple_1975,
	title = {A simple model for population dynamics in stochastic environments},
	volume = {109},
	doi = {10.1086/283039},
	number = {970},
	journal = {The American Naturalist},
	author = {Roughgarden, J.},
	year = {1975},
	pages = {713--736},
}

@article{hauzy_confronting_2013,
	title = {Confronting the paradox of enrichment to the metacommunity perspective},
	volume = {8},
	doi = {10.1371/journal.pone.0082969},
	language = {en},
	number = {12},
	journal = {PLoS ONE},
	author = {Hauzy, C. and Nadin, G. and Canard, E. and Gounand, I. and Mouquet, N.},
	year = {2013},
	pages = {e82969},
}

@article{jansen_regulation_1995,
	title = {Regulation of predator-prey systems through spatial interactions: a possible solution to the paradox of enrichment},
	volume = {74},
	shorttitle = {Regulation of predator-prey systems through spatial interactions},
	doi = {10.2307/3545983},
	number = {3},
	journal = {Oikos},
	author = {Jansen, V. A. A.},
	year = {1995},
	pages = {384},
}

@article{lin_bifurcation_2023,
	title = {Bifurcation and overexploitation in {Rosenzweig}-{MacArthur} model},
	volume = {28},
	doi = {10.3934/dcdsb.2022094},
	number = {1},
	journal = {Discrete and Continuous Dynamical Systems - B},
	author = {Lin, X. and Xu, Y. and Gao, D. and Fan, G.},
	year = {2023},
	pages = {690},
}

@book{freidlin_random_1998,
	address = {New York, NY},
	series = {Grundlehren der mathematischen {Wissenschaften}},
	title = {Random perturbations of dynamical systems},
	volume = {260},
	copyright = {http://www.springer.com/tdm},
	isbn = {978-1-4612-6839-0 978-1-4612-0611-8},
	doi = {10.1007/978-1-4612-0611-8},
	publisher = {Springer New York},
	author = {Freidlin, M. I. and Wentzell, A. D.},
	year = {1998},
}

@article{tian_additive_2017,
	title = {Additive noise driven phase transitions in a predator-prey system},
	volume = {46},
	doi = {10.1016/j.apm.2017.01.087},
	language = {en},
	journal = {Applied Mathematical Modelling},
	author = {Tian, C. and Lin, L. and Zhang, L.},
	year = {2017},
	pages = {423--432},
}

@article{wang_time-delay-induced_2018,
	title = {Time-delay-induced dynamical behaviors for an ecological vegetation growth system driven by cross-correlated multiplicative and additive noises},
	volume = {41},
	doi = {10.1140/epje/i2018-11668-9},
	language = {en},
	number = {5},
	journal = {The European Physical Journal E},
	author = {Wang, K.-K. and Ye, H. and Wang, Y.-J. and Li, S.-H.},
	year = {2018},
	pages = {60},
}

@article{wu_stochastic_2019,
	title = {Stochastic sensitivity analysis of noise-induced transitions in a predator-prey model with environmental toxins},
	volume = {16},
	doi = {10.3934/mbe.2019104},
	language = {en},
	number = {4},
	journal = {Mathematical Biosciences and Engineering},
	author = {Wu, D. and Wang, H. and Yuan, S.},
	year = {2019},
	pages = {2141--2153},
}

@article{pal_nonequilibrium_2025,
	title = {Nonequilibrium dynamics in a noise-induced predator–prey model},
	volume = {191},
	doi = {10.1016/j.chaos.2024.115884},
	language = {en},
	journal = {Chaos, Solitons \& Fractals},
	author = {Pal, S. and Banerjee, M. and Melnik, R.},
	year = {2025},
	pages = {115884},
}

@article{qi_threshold_2021,
	title = {Threshold behavior of a stochastic predator–prey system with prey refuge and fear effect},
	volume = {113},
	doi = {10.1016/j.aml.2020.106846},
	language = {en},
	journal = {Applied Mathematics Letters},
	author = {Qi, H. and Meng, X.},
	year = {2021},
	pages = {106846},
}

@article{liu_bidirectional_2025,
	title = {Bidirectional endothelial feedback drives turing-vascular patterning and drug-resistance niches: a hybrid {PDE}-agent-based study},
	volume = {12},
	shorttitle = {Bidirectional endothelial feedback drives turing-vascular patterning and drug-resistance niches},
	doi = {10.3390/bioengineering12101097},
	language = {en},
	number = {10},
	journal = {Bioengineering},
	author = {Liu, Z. and Wang, L. S. and Yu, J. and Zhang, J. and Martel, E. and Li, S.},
	year = {2025},
	pages = {1097},
}

@article{liang_global_2025,
	title = {Global well-posedness and stability of nonlocal damage-structured lineage model with feedback and dedifferentiation},
	volume = {13},
	doi = {10.3390/math13223583},
	language = {en},
	number = {22},
	journal = {Mathematics},
	author = {Liang, Y. and Wang, L. S. and Yu, J. and Liu, Z.},
	year = {2025},
	pages = {3583},
}

@article{wang_damage-structured_2026,
	title = {A damage-structured {PDE} model of stem cell hierarchies: {The} dual role of dedifferentiation in tissue homeostasis and aging},
	volume = {21},
	shorttitle = {A damage-structured {PDE} model of stem cell hierarchies},
	doi = {10.1371/journal.pone.0335163},
	language = {en},
	number = {2},
	journal = {PLOS One},
	author = {Wang, L. S. and Yu, J. and Liu, Z.},
	year = {2026},
	pages = {e0335163},
}

@misc{yu_beyond_2026,
	title = {Beyond diagonal noise: a better predator-prey modeling framework with cross-covariance},
	copyright = {Creative Commons Attribution 4.0 International},
	shorttitle = {Beyond diagonal noise},
	doi = {10.48550/ARXIV.2602.22489},
	publisher = {arXiv},
	author = {Yu, J. and Wang, L. S.},
	year = {2026},
	note = {Version Number: 1},
}

@misc{yu_full-covariance_2026,
	title = {Full-covariance chemical langevin predator--prey diffusion with absorbing boundaries},
	copyright = {Creative Commons Attribution 4.0 International},
	doi = {10.48550/ARXIV.2602.05336},
	publisher = {arXiv},
	author = {Yu, J. and Wang, L. S. and Gao, Y. and Liang, Y.},
	year = {2026},
	note = {Version Number: 1},
}

@misc{yu_chemotactic_2026,
	title = {Chemotactic feedback controls patterning in hybrid tumor--stroma model},
	copyright = {Creative Commons Attribution 4.0 International},
	doi = {10.48550/ARXIV.2601.16337},
	publisher = {arXiv},
	author = {Yu, J. and Wang, L. S. and Liu, Z. and Liu, J.},
	year = {2026},
	note = {Version Number: 1},
}

@article{may_limit_1972,
	title = {Limit cycles in predator-prey communities},
	volume = {177},
	doi = {10.1126/science.177.4052.900},
	language = {en},
	number = {4052},
	journal = {Science},
	author = {May, R. M.},
	year = {1972},
	pages = {900--902},
}

@article{abrams_invulnerable_1996,
	title = {Invulnerable prey and the paradox of enrichment},
	volume = {77},
	copyright = {http://onlinelibrary.wiley.com/termsAndConditions\#vor},
	doi = {10.2307/2265581},
	language = {en},
	number = {4},
	journal = {Ecology},
	author = {Abrams, P. A. and Walters, C. J.},
	year = {1996},
	pages = {1125--1133},
}

@article{kirk_enrichment_1998,
	title = {Enrichment can stabilize population dynamics: autotoxins and density dependence},
	volume = {79},
	copyright = {http://doi.wiley.com/10.1002/tdm\_license\_1.1},
	shorttitle = {Enrichment can stabilize population dynamics},
	doi = {10.1890/0012-9658(1998)079[2456:ECSPDA]2.0.CO;2},
	language = {en},
	number = {7},
	journal = {Ecology},
	author = {Kirk, K. L.},
	year = {1998},
	pages = {2456--2462},
}

@article{roy_stability_2007,
	title = {The stability of ecosystems: a brief overview of the paradox of enrichment},
	volume = {32},
	copyright = {http://www.springer.com/tdm},
	shorttitle = {The stability of ecosystems},
	doi = {10.1007/s12038-007-0040-1},
	language = {en},
	number = {2},
	journal = {Journal of Biosciences},
	author = {Roy, S. and Chattopadhyay, J.},
	year = {2007},
	pages = {421--428},
}

@article{mckane_predator-prey_2005,
	title = {Predator-prey cycles from resonant amplification of demographic stochasticity},
	volume = {94},
	copyright = {http://link.aps.org/licenses/aps-default-license},
	doi = {10.1103/PhysRevLett.94.218102},
	language = {en},
	number = {21},
	journal = {Physical Review Letters},
	author = {McKane, A. J. and Newman, T. J.},
	year = {2005},
	pages = {218102},
}

@article{boland_how_2008,
	title = {How limit cycles and quasi-cycles are related in systems with intrinsic noise},
	volume = {2008},
	doi = {10.1088/1742-5468/2008/09/P09001},
	number = {09},
	journal = {Journal of Statistical Mechanics: Theory and Experiment},
	author = {Boland, R. P. and Galla, T. and McKane, A. J.},
	year = {2008},
	pages = {P09001},
}

@article{scheffer_early-warning_2009,
	title = {Early-warning signals for critical transitions},
	volume = {461},
	copyright = {https://www.springer.com/tdm},
	doi = {10.1038/nature08227},
	language = {en},
	number = {7260},
	journal = {Nature},
	author = {Scheffer, M. and Bascompte, J. and Brock, W. A. and Brovkin, V. and Carpenter, S. R. and Dakos, V.},
	year = {2009},
	pages = {53--59},
}

@article{dakos_methods_2012,
	title = {Methods for detecting early warnings of critical transitions in time series illustrated using simulated ecological data},
	volume = {7},
	doi = {10.1371/journal.pone.0041010},
	language = {en},
	number = {7},
	journal = {PLoS ONE},
	author = {Dakos, V. and Carpenter, S. R. and Brock, W. A. and Ellison, A. M. and Guttal, V. and Ives, A. R.},
	year = {2012},
	pages = {e41010},
}

@article{boettiger_quantifying_2012,
	title = {Quantifying limits to detection of early warning for critical transitions},
	volume = {9},
	doi = {10.1098/rsif.2012.0125},
	language = {en},
	number = {75},
	journal = {Journal of The Royal Society Interface},
	author = {Boettiger, C. and Hastings, A.},
	year = {2012},
	pages = {2527--2539},
}

@article{boettiger_patterns_2013,
	title = {From patterns to predictions},
	volume = {493},
	doi = {10.1038/493157a},
	language = {en},
	number = {7431},
	journal = {Nature},
	author = {Boettiger, C. and Hastings, A.},
	year = {2013},
	pages = {157--158},
}

@article{lenton_tipping_2008,
	title = {Tipping elements in the {Earth}'s climate system},
	volume = {105},
	doi = {10.1073/pnas.0705414105},
	language = {en},
	number = {6},
	journal = {Proceedings of the National Academy of Sciences},
	author = {Lenton, T. M. and Held, H. and Kriegler, E. and Hall, J. W. and Lucht, W. and Rahmstorf, S.},
	year = {2008},
	pages = {1786--1793},
}

@book{van_kampen_stochastic_2011,
	address = {Amsterdam},
	edition = {3rd ed},
	series = {North-{Holland} {Personal} {Library}},
	title = {Stochastic processes in physics and chemistry},
	isbn = {978-0-08-047536-3 978-0-444-52965-7},
	language = {eng},
	publisher = {Elsevier Science \& Technology},
	author = {van Kampen, N. G.},
	year = {2011},
}

@article{kurtz_solutions_1970,
	title = {Solutions of ordinary differential equations as limits of pure jump markov processes},
	volume = {7},
	copyright = {https://www.cambridge.org/core/terms},
	doi = {10.2307/3212147},
	language = {en},
	number = {1},
	journal = {Journal of Applied Probability},
	author = {Kurtz, T. G.},
	year = {1970},
	pages = {49--58},
}

@article{kurtz_limit_1971,
	title = {Limit theorems for sequences of jump {Markov} processes approximating ordinary differential processes},
	volume = {8},
	copyright = {https://www.cambridge.org/core/terms},
	doi = {10.2307/3211904},
	language = {en},
	number = {2},
	journal = {Journal of Applied Probability},
	author = {Kurtz, T. G.},
	year = {1971},
	pages = {344--356},
}

@book{gardiner_stochastic_2009,
	address = {Berlin Heidelberg},
	edition = {4th ed},
	series = {Springer series in synergetics},
	title = {Stochastic methods: a handbook for the natural and social sciences},
	isbn = {978-3-642-08962-6 978-3-540-70712-7},
	shorttitle = {Stochastic methods},
	language = {eng},
	number = {13},
	publisher = {Springer},
	author = {Gardiner, C. W.},
	year = {2009},
}

@book{risken_fokker-planck_1996,
	address = {Berlin, Heidelberg},
	series = {Springer {Series} in {Synergetics}},
	title = {The {Fokker}-{Planck} equation: methods of solution and applications},
	volume = {18},
	copyright = {https://www.springernature.com/gp/researchers/text-and-data-mining},
	isbn = {978-3-540-61530-9 978-3-642-61544-3},
	shorttitle = {The fokker-planck equation},
	doi = {10.1007/978-3-642-61544-3},
	language = {en},
	publisher = {Springer Berlin Heidelberg},
	author = {Risken, H.},
	year = {1996},
}

@article{lugo_quasicycles_2008,
	title = {Quasicycles in a spatial predator-prey model},
	volume = {78},
	copyright = {http://link.aps.org/licenses/aps-default-license},
	doi = {10.1103/PhysRevE.78.051911},
	language = {en},
	number = {5},
	journal = {Physical Review E},
	author = {Lugo, C. A. and McKane, A. J.},
	year = {2008},
	pages = {051911},
}

@article{black_stochastic_2012,
	title = {Stochastic formulation of ecological models and their applications},
	volume = {27},
	copyright = {https://www.elsevier.com/tdm/userlicense/1.0/},
	doi = {10.1016/j.tree.2012.01.014},
	language = {en},
	number = {6},
	journal = {Trends in Ecology \& Evolution},
	author = {Black, A. J. and McKane, A. J.},
	year = {2012},
	pages = {337--345},
}

@article{gillespie_exact_1977,
	title = {Exact stochastic simulation of coupled chemical reactions},
	volume = {81},
	doi = {10.1021/j100540a008},
	language = {en},
	number = {25},
	journal = {The Journal of Physical Chemistry},
	author = {Gillespie, D. T.},
	year = {1977},
	pages = {2340--2361},
}

@article{gillespie_chemical_2000,
	title = {The chemical {Langevin} equation},
	volume = {113},
	doi = {10.1063/1.481811},
	language = {en},
	number = {1},
	journal = {The Journal of Chemical Physics},
	author = {Gillespie, D. T.},
	year = {2000},
	pages = {297--306},
}

@book{ethier_markov_1986,
	address = {New York},
	edition = {1},
	series = {Wiley {Series} in {Probability} and {Statistics}},
	title = {Markov processes: characterization and convergence},
	copyright = {http://doi.wiley.com/10.1002/tdm\_license\_1.1},
	isbn = {978-0-471-08186-9 978-0-470-31665-8},
	shorttitle = {Markov processes},
	doi = {10.1002/9780470316658},
	language = {en},
	publisher = {Wiley},
	author = {Ethier, S. N. and Kurtz, T. G.},
	year = {1986},
}

@book{anderson_stochastic_2015,
	address = {Cham},
	title = {Stochastic analysis of biochemical systems},
	copyright = {https://www.springernature.com/gp/researchers/text-and-data-mining},
	isbn = {978-3-319-16894-4 978-3-319-16895-1},
	doi = {10.1007/978-3-319-16895-1},
	language = {en},
	publisher = {Springer International Publishing},
	author = {Anderson, D. F. and Kurtz, T. G.},
	year = {2015},
}

@article{wang_algebraicspectral_2026,
	title = {Algebraic–spectral thresholds and discrete–continuous stability transfer in {Leslie}–{Gower} systems},
	volume = {34},
	doi = {10.3934/era.2026013},
	number = {1},
	journal = {Electronic Research Archive},
	author = {Wang, L. S. and Yu, J.},
	year = {2026},
	pages = {251--290},
}

@article{alonso_stochastic_2007,
	title = {Stochastic amplification in epidemics},
	volume = {4},
	copyright = {https://royalsociety.org/journals/ethics-policies/data-sharing-mining/},
	doi = {10.1098/rsif.2006.0192},
	language = {en},
	number = {14},
	journal = {Journal of The Royal Society Interface},
	author = {Alonso, D. and McKane, A. J. and Pascual, M.},
	year = {2007},
	pages = {575--582},
}

@article{wang_analysis_2025,
	title = {Analysis framework for stochastic predator–prey model with demographic noise},
	volume = {27},
	doi = {10.1016/j.rinam.2025.100621},
	language = {en},
	journal = {Results in Applied Mathematics},
	author = {Wang, L. S. and Yu, J.},
	year = {2025},
	pages = {100621},
}

@article{wang_analysis_2025-1,
	title = {Analysis and mean-field limit of a hybrid {PDE}-{ABM} modeling angiogenesis-regulated resistance evolution},
	volume = {13},
	doi = {10.3390/math13172898},
	language = {en},
	number = {17},
	journal = {Mathematics},
	author = {Wang, L. S. and Yu, J. and Li, S. and Liu, Z.},
	year = {2025},
	pages = {2898},
}

@article{chatterjee_predatorprey_2024,
	title = {A predator–prey model with prey refuge: under a stochastic and deterministic environment},
	volume = {112},
	shorttitle = {A predator–prey model with prey refuge},
	doi = {10.1007/s11071-024-09756-9},
	language = {en},
	number = {15},
	journal = {Nonlinear Dynamics},
	author = {Chatterjee, A. and Abbasi, M. A. and Venturino, E. and Zhen, J. and Haque, M.},
	year = {2024},
	pages = {13667--13693},
}

@book{kuznetsov_elements_2023,
	address = {Cham},
	series = {Applied {Mathematical} {Sciences}},
	title = {Elements of applied bifurcation theory},
	volume = {112},
	copyright = {https://www.springernature.com/gp/researchers/text-and-data-mining},
	isbn = {978-3-031-22006-7 978-3-031-22007-4},
	doi = {10.1007/978-3-031-22007-4},
	language = {en},
	publisher = {Springer International Publishing},
	author = {Kuznetsov, Y. A.},
	year = {2023},
}

@book{bazykin_nonlinear_1998,
	address = {Singapore},
	series = {World {Scientific} {Series} on {Nonlinear} {Science} {Series} {A}},
	title = {Nonlinear dynamics of interacting populations},
	volume = {11},
	isbn = {978-981-02-1685-6 978-981-279-872-5},
	doi = {10.1142/2284},
	language = {en},
	publisher = {WORLD SCIENTIFIC},
	author = {Bazykin, A. D. and Khibnik, A. I and Krauskopf, B.},
	year = {1998},
}

@article{feller_grundlagen_1939,
	title = {Die grundlagen der volterraschen theorie des kampfes ums dasein in wahrscheinlichkeitstheoretischer behandlung},
	volume = {5},
	copyright = {http://www.springer.com/tdm},
	doi = {10.1007/BF01602932},
	language = {de},
	number = {1},
	journal = {Acta Biotheoretica},
	author = {Feller, W.},
	year = {1939},
	pages = {11--40},
}

@book{gillespie_markov_1992,
	address = {Boston},
	title = {Markov processes: an introduction for physical scientists},
	isbn = {978-0-08-091837-2},
	shorttitle = {Markov processes},
	language = {eng},
	publisher = {Academic Press},
	author = {Gillespie, D. T.},
	year = {1992},
}

@incollection{anderson_continuous_2011,
	address = {New York, NY},
	title = {Continuous time markov chain models for chemical reaction networks},
	isbn = {978-1-4419-6765-7 978-1-4419-6766-4},
	doi = {10.1007/978-1-4419-6766-4_1},
	language = {en},
	booktitle = {Design and {Analysis} of {Biomolecular} {Circuits}},
	publisher = {Springer New York},
	author = {Anderson, D. F. and Kurtz, T. G.},
	year = {2011},
	pages = {3--42},
}

@article{kurtz_strong_1978,
	title = {Strong approximation theorems for density dependent {Markov} chains},
	volume = {6},
	copyright = {https://www.elsevier.com/tdm/userlicense/1.0/},
	doi = {10.1016/0304-4149(78)90020-0},
	language = {en},
	number = {3},
	journal = {Stochastic Processes and their Applications},
	author = {Kurtz, T. G.},
	year = {1978},
	pages = {223--240},
}

@article{thomas_rigorous_2012,
	title = {Rigorous elimination of fast stochastic variables from the linear noise approximation using projection operators},
	volume = {86},
	copyright = {http://link.aps.org/licenses/aps-default-license},
	doi = {10.1103/PhysRevE.86.041110},
	language = {en},
	number = {4},
	journal = {Physical Review E},
	author = {Thomas, P. and Grima, R. and Straube, A. V.},
	year = {2012},
	pages = {041110},
}

@article{cao_efficient_2006,
	title = {Efficient step size selection for the tau-leaping simulation method},
	volume = {124},
	doi = {10.1063/1.2159468},
	language = {en},
	number = {4},
	journal = {The Journal of Chemical Physics},
	author = {Cao, Y. and Gillespie, D. T. and Petzold, L. R.},
	year = {2006},
	pages = {044109},
}

@book{nisbet_modelling_1982,
	address = {Chichester ; New York},
	title = {Modelling fluctuating populations},
	publisher = {Wiley},
	author = {Nisbet, R. M. and Gurney, W. S. C.},
	year = {1982},
}

@misc{yu_mode-wise_2026,
	title = {Mode-wise spectral criteria for coupled mass transport in hybrid {PDE}--{ODE} tumor microenvironments},
	copyright = {Creative Commons Attribution 4.0 International},
	doi = {10.48550/ARXIV.2601.20204},
	publisher = {arXiv},
	author = {Yu, J. and Wang, L. S. and Liu, Z. and Liu, J.},
	year = {2026},
	note = {Version Number: 1},
}

@article{barraquand_moving_2017,
	title = {Moving forward in circles: challenges and opportunities in modelling population cycles},
	volume = {20},
	shorttitle = {Moving forward in circles},
	doi = {10.1111/ele.12789},
	language = {en},
	number = {8},
	journal = {Ecology Letters},
	author = {Barraquand, F. and Louca, S. and Abbott, K. C. and Cobbold, C. A. and Cordoleani, F. and DeAngelis, D. L.},
	year = {2017},
	pages = {1074--1092},
}

@article{mckane_stochastic_2014,
	title = {Stochastic pattern formation and spontaneous polarisation: the linear noise approximation and beyond},
	volume = {76},
	copyright = {http://www.springer.com/tdm},
	shorttitle = {Stochastic pattern formation and spontaneous polarisation},
	doi = {10.1007/s11538-013-9827-4},
	language = {en},
	number = {4},
	journal = {Bulletin of Mathematical Biology},
	author = {McKane, A. J. and Biancalani, T. and Rogers, T.},
	year = {2014},
	pages = {895--921},
}

@article{butler_robust_2009,
	title = {Robust ecological pattern formation induced by demographic noise},
	volume = {80},
	copyright = {http://link.aps.org/licenses/aps-default-license},
	doi = {10.1103/PhysRevE.80.030902},
	language = {en},
	number = {3},
	journal = {Physical Review E},
	author = {Butler, T. and Goldenfeld, N.},
	year = {2009},
	pages = {030902},
}

@article{biancalani_stochastic_2010,
	title = {Stochastic {Turing} patterns in the {Brusselator} model},
	volume = {81},
	copyright = {http://link.aps.org/licenses/aps-default-license},
	doi = {10.1103/PhysRevE.81.046215},
	language = {en},
	number = {4},
	journal = {Physical Review E},
	author = {Biancalani, T. and Fanelli, D. and Di Patti, F.},
	year = {2010},
	pages = {046215},
}

@article{biancalani_stochastic_2011,
	title = {Stochastic waves in a {Brusselator} model with nonlocal interaction},
	volume = {84},
	copyright = {http://link.aps.org/licenses/aps-default-license},
	doi = {10.1103/PhysRevE.84.026201},
	language = {en},
	number = {2},
	journal = {Physical Review E},
	author = {Biancalani, T. and Galla, T. and McKane, A. J.},
	year = {2011},
	pages = {026201},
}

@article{rozhnova_stochastic_2010,
	title = {Stochastic effects in a seasonally forced epidemic model},
	volume = {82},
	copyright = {http://link.aps.org/licenses/aps-default-license},
	doi = {10.1103/PhysRevE.82.041906},
	language = {en},
	number = {4},
	journal = {Physical Review E},
	author = {Rozhnova, G. and Nunes, A.},
	year = {2010},
	pages = {041906},
}

@article{black_stochastic_2010,
	title = {Stochastic amplification in an epidemic model with seasonal forcing},
	volume = {267},
	copyright = {https://www.elsevier.com/tdm/userlicense/1.0/},
	doi = {10.1016/j.jtbi.2010.08.014},
	language = {en},
	number = {1},
	journal = {Journal of Theoretical Biology},
	author = {Black, A. J. and McKane, A. J.},
	year = {2010},
	pages = {85--94},
}

@article{boland_limit_2009,
	title = {Limit cycles, complex {Floquet} multipliers, and intrinsic noise},
	volume = {79},
	copyright = {http://link.aps.org/licenses/aps-default-license},
	doi = {10.1103/PhysRevE.79.051131},
	language = {en},
	number = {5},
	journal = {Physical Review E},
	author = {Boland, R. P. and Galla, T. and McKane, A. J.},
	year = {2009},
	pages = {051131},
}

@book{allen_introduction_2010,
	address = {New York},
	edition = {2},
	title = {An introduction to stochastic processes with applications to biology},
	isbn = {978-0-429-18460-4},
	doi = {10.1201/b12537},
	language = {en},
	publisher = {Chapman and Hall/CRC},
	author = {Allen, L. J. S.},
	year = {2010},
}

@book{dembo_large_2010,
	address = {Berlin, Heidelberg},
	series = {Stochastic {Modelling} and {Applied} {Probability}},
	title = {Large deviations techniques and applications},
	volume = {38},
	copyright = {http://www.springer.com/tdm},
	isbn = {978-3-642-03310-0 978-3-642-03311-7},
	doi = {10.1007/978-3-642-03311-7},
	publisher = {Springer Berlin Heidelberg},
	author = {Dembo, A. and Zeitouni, O.},
	year = {2010},
}

@article{bashkirtseva_sensitivity_2000,
	title = {Sensitivity analysis of the stochastically and periodically forced {Brusselator}},
	volume = {278},
	copyright = {https://www.elsevier.com/tdm/userlicense/1.0/},
	doi = {10.1016/S0378-4371(99)00453-7},
	language = {en},
	number = {1-2},
	journal = {Physica A: Statistical Mechanics and its Applications},
	author = {Bashkirtseva, I. A and Ryashko, L. B},
	year = {2000},
	pages = {126--139},
}

@article{bashkirtseva_sensitivity_2011,
	title = {Sensitivity analysis of stochastic attractors and noise-induced transitions for population model with {Allee} effect},
	volume = {21},
	issn = {1054-1500, 1089-7682},
	doi = {10.1063/1.3647316},
	language = {en},
	number = {4},
	journal = {Chaos: An Interdisciplinary Journal of Nonlinear Science},
	author = {Bashkirtseva, I. and Ryashko, L.},
	year = {2011},
	pages = {047514},
}

@article{ryashko_sensitivity_2018,
	title = {Sensitivity analysis of the noise-induced oscillatory multistability in {Higgins} model of glycolysis},
	volume = {28},
	doi = {10.1063/1.4989982},
	language = {en},
	number = {3},
	journal = {Chaos: An Interdisciplinary Journal of Nonlinear Science},
	author = {Ryashko, L.},
	year = {2018},
	pages = {033602},
}

@article{alexandrov_noise-induced_2018,
	title = {Noise-induced transitions and shifts in a climate–vegetation feedback model},
	volume = {5},
	doi = {10.1098/rsos.171531},
	language = {en},
	number = {4},
	journal = {Royal Society Open Science},
	author = {Alexandrov, D. V. and Bashkirtseva, I. A. and Ryashko, L. B.},
	year = {2018},
	pages = {171531},
}

@article{bashkirtseva_noise-induced_2016,
	title = {Noise-induced extinction in {Bazykin}-{Berezovskaya} population model},
	volume = {89},
	doi = {10.1140/epjb/e2016-70345-6},
	language = {en},
	number = {7},
	journal = {The European Physical Journal B},
	author = {Bashkirtseva, I. and Ryashko, L.},
	year = {2016},
	pages = {165},
}

@article{bashkirtseva_sensitivity_2005,
	title = {Sensitivity and chaos control for the forced nonlinear oscillations},
	volume = {26},
	copyright = {https://www.elsevier.com/tdm/userlicense/1.0/},
	doi = {10.1016/j.chaos.2005.03.029},
	language = {en},
	number = {5},
	journal = {Chaos, Solitons \& Fractals},
	author = {Bashkirtseva, I. and Ryashko, L.},
	year = {2005},
	pages = {1437--1451},
}

@article{dakos_slowing_2008,
	title = {Slowing down as an early warning signal for abrupt climate change},
	volume = {105},
	doi = {10.1073/pnas.0802430105},
	language = {en},
	number = {38},
	journal = {Proceedings of the National Academy of Sciences},
	author = {Dakos, V. and Scheffer, M. and Van Nes, E. H. and Brovkin, V. and Petoukhov, V. and Held, H.},
	year = {2008},
	pages = {14308--14312},
}

@article{held_detection_2004,
	title = {Detection of climate system bifurcations by degenerate fingerprinting},
	volume = {31},
	copyright = {http://onlinelibrary.wiley.com/termsAndConditions\#vor},
	doi = {10.1029/2004GL020972},
	language = {en},
	number = {23},
	journal = {Geophysical Research Letters},
	author = {Held, H. and Kleinen, T.},
	year = {2004},
	pages = {2004GL020972},
}

@article{lade_early_2012,
	title = {Early warning signals for critical transitions: a generalized modeling approach},
	volume = {8},
	shorttitle = {Early warning signals for critical transitions},
	doi = {10.1371/journal.pcbi.1002360},
	language = {en},
	number = {2},
	journal = {PLoS Computational Biology},
	author = {Lade, S. J. and Gross, T.},
	editor = {Pascual, M.},
	year = {2012},
	pages = {e1002360},
}

@article{dutta_robustness_2018,
	title = {Robustness of early warning signals for catastrophic and non‐catastrophic transitions},
	volume = {127},
	doi = {10.1111/oik.05172},
	language = {en},
	number = {9},
	journal = {Oikos},
	author = {Dutta, P. S. and Sharma, Y. and Abbott, K. C.},
	year = {2018},
	pages = {1251--1263},
}

@article{dakos_resilience_2015,
	title = {Resilience indicators: prospects and limitations for early warnings of regime shifts},
	volume = {370},
	shorttitle = {Resilience indicators},
	doi = {10.1098/rstb.2013.0263},
	language = {en},
	number = {1659},
	journal = {Philosophical Transactions of the Royal Society B: Biological Sciences},
	author = {Dakos, V. and Carpenter, S. R. and Van Nes, E. H. and Scheffer, M.},
	year = {2015},
	pages = {20130263},
}

@article{ditlevsen_tipping_2010,
	title = {Tipping points: {Early} warning and wishful thinking},
	volume = {37},
	copyright = {http://onlinelibrary.wiley.com/termsAndConditions\#vor},
	shorttitle = {Tipping points},
	doi = {10.1029/2010GL044486},
	language = {en},
	number = {19},
	journal = {Geophysical Research Letters},
	author = {Ditlevsen, P. D. and Johnsen, S. J.},
	year = {2010},
	pages = {2010GL044486},
}

@article{lenton_early_2011,
	title = {Early warning of climate tipping points},
	volume = {1},
	copyright = {http://www.springer.com/tdm},
	doi = {10.1038/nclimate1143},
	language = {en},
	number = {4},
	journal = {Nature Climate Change},
	author = {Lenton, T. M.},
	year = {2011},
	pages = {201--209},
}

@article{chen_eigenvalues_2019,
	title = {Eigenvalues of the covariance matrix as early warning signals for critical transitions in ecological systems},
	volume = {9},
	doi = {10.1038/s41598-019-38961-5},
	language = {en},
	number = {1},
	journal = {Scientific Reports},
	author = {Chen, S. and ODea, E. B. and Drake, J. M. and Epureanu, B. I.},
	year = {2019},
	pages = {2572},
}

@article{oregan_theory_2013,
	title = {Theory of early warning signals of disease emergence and leading indicators of elimination},
	volume = {6},
	copyright = {http://creativecommons.org/licenses/by/2.0},
	doi = {10.1007/s12080-013-0185-5},
	language = {en},
	number = {3},
	journal = {Theoretical Ecology},
	author = {O’Regan, S. M. and Drake, J. M.},
	year = {2013},
	pages = {333--357},
}

@article{krkosek_signals_2014,
	title = {On signals of phase transitions in salmon population dynamics},
	volume = {281},
	doi = {10.1098/rspb.2013.3221},
	language = {en},
	number = {1784},
	journal = {Proceedings of the Royal Society B: Biological Sciences},
	author = {Krkošek, M. and Drake, J. M.},
	year = {2014},
	pages = {20133221},
}

@article{gillespie_stochastic_2007,
	title = {Stochastic simulation of chemical kinetics},
	volume = {58},
	doi = {10.1146/annurev.physchem.58.032806.104637},
	language = {en},
	number = {1},
	journal = {Annual Review of Physical Chemistry},
	author = {Gillespie, D. T.},
	year = {2007},
	pages = {35--55},
}

@book{bartlett_stochastic_1970,
	address = {London},
	edition = {1960th},
	series = {Methuen's monographs on applied probability and statistics},
	title = {Stochastic population models in ecology and epidemiology},
	isbn = {978-0-416-52330-0},
	language = {eng},
	publisher = {Methuen},
	author = {Bartlett, M. S.},
	year = {1970},
}

@article{mcquarrie_stochastic_1967,
	title = {Stochastic approach to chemical kinetics},
	volume = {4},
	copyright = {https://www.cambridge.org/core/terms},
	doi = {10.2307/3212214},
	language = {en},
	number = {3},
	journal = {Journal of Applied Probability},
	author = {McQuarrie, D. A.},
	year = {1967},
	pages = {413--478},
}

@article{rosenzweig_graphical_1963,
	title = {Graphical representation and stability conditions of predator-prey interactions},
	volume = {97},
	doi = {10.1086/282272},
	language = {en},
	number = {895},
	journal = {The American Naturalist},
	author = {Rosenzweig, M. L. and MacArthur, R. H.},
	year = {1963},
	pages = {209--223},
}

@article{macarthur_species_1970,
	title = {Species packing and competitive equilibrium for many species},
	volume = {1},
	copyright = {https://www.elsevier.com/tdm/userlicense/1.0/},
	doi = {10.1016/0040-5809(70)90039-0},
	language = {en},
	number = {1},
	journal = {Theoretical Population Biology},
	author = {MacArthur, R.},
	year = {1970},
	pages = {1--11},
}

@article{rosenzweig_paradox_1971,
	title = {Paradox of enrichment: destabilization of exploitation ecosystems in ecological time},
	volume = {171},
	shorttitle = {Paradox of enrichment},
	doi = {10.1126/science.171.3969.385},
	language = {en},
	number = {3969},
	journal = {Science},
	author = {Rosenzweig, M. L.},
	year = {1971},
	pages = {385--387},
}

@article{bashkirtseva_stochastic_2013,
	title = {Stochastic sensitivity analysis of the noise-induced excitability in a model of a hair bundle},
	volume = {87},
	copyright = {http://link.aps.org/licenses/aps-default-license},
	doi = {10.1103/PhysRevE.87.052711},
	language = {en},
	number = {5},
	journal = {Physical Review E},
	author = {Bashkirtseva, I. and Neiman, A. B. and Ryashko, L.},
	year = {2013},
	pages = {052711},
}

\end{document}